\newcommand{\R}{\mathbb{R}}
\newcommand{\N}{\mathbb{N}}
\newcommand{\Z}{\mathbb{Z}}
\newcommand{\Q}{\mathbb{Q}}
\newcommand\bN{\mathbf{N}}
\newcommand{\ca}{\mathcal A}
\newcommand{\twosum}[2]{\sum_{\substack{#1\\#2}}}
\newcommand{\twoprod}[2]{\prod_{\substack{#1\\#2}}}
\newcommand{\p}{\mathcal P}
\newcommand{\cR}{\mathcal R}
\newtheorem{thm}{Theorem}[section]
\newtheorem{lem}[thm]{Lemma}
\begin{document}
\title{Cubic polynomials represented by norm forms}
\author{A.J. Irving\\
Mathematical Institute, Oxford}
\date{}

\maketitle

\begin{abstract}
We show that for an irreducible cubic $f\in\Z[x]$ and a full norm form $\bN(x_1,\ldots,x_k)$ for a number field $K/\Q$ satisfying certain hypotheses the variety 
$$f(t)=\bN(x_1,\ldots,x_k)\ne 0$$
satisfies the Hasse principle.  Our proof uses sieve methods.
\end{abstract}

\footnotetext{2010 Mathematics Subject Classification.  Primary 14G05; Secondary 11D57, 11N36.} 

\section{Introduction}

In this paper we will consider the Diophantine equation 
\begin{equation}
  \label{eq:diophantine}
  f(t)=\bN(x_1,\ldots,x_k)\ne 0,\\
\end{equation}
where $f\in\Z[x]$ is a polynomial and $\bN(x_1,\ldots,x_k)$ is a full norm form for some number field $K/\Q$.  Thus, for some basis $\omega_1,\ldots,\omega_k$  for  the degree $k$ extension $K/\Q$, we have 
$$\bN(x_1,\ldots,x_k)=N_{K/\Q}(x_1\omega_1+\ldots+x_k\omega_k).$$
We are interested in describing families of fields $K/\Q$ and polynomials $f$ for which (\ref{eq:diophantine}) satisfies the Hasse principle.  This means that if there is a solution in $\Q_p^{k+1}$, for all $p$, and in $\R^{k+1}$ then there is one in $\Q^{k+1}$.

Browning and Heath-Brown, in \cite{rhbbrowning}, describe many of the existing results on this problem.  They establish that the Hasse principle holds when $f$ is an irreducible polynomial of degree $2$ and $K/\Q$ is a quartic extension containing a root of $f$.  Their results were extended by Derenthal, 
Smeets and Wei, \cite{dsw}, who establish that for any quadratic $f$ and any extension $K/\Q$ the Brauer--Manin obstruction 
is the only obstruction to the Hasse principle.

We are interested in the case that $f$ is an irreducible cubic.  Previous work establishes that, when $K/\Q$ has degree $2$ or $3$, the only obstruction to the Hasse principle is the Brauer--Manin 
obstruction.  Specifically, if $[K:\Q]=2$ then (\ref{eq:diophantine}) defines a Ch\^atelet surface so the result follows by the work of Colliot-Th\'el\`ene, Sansuc and Swinnerton-Dyer \cite{crelle}, whereas if $[K:\Q]=3$ it follows from Colliot-Th\'el\`ene and Salberger \cite{ct-salb}.   As far as we know, no case of the Hasse principle has been established when $f$ is an irreducible cubic and $[K:\Q]>3$.  

If, instead of being irreducible, $f$ splits completely into linear factors over $\Q$ then the problem is considerably different.  A recent result of Browning and Matthiesen \cite{browningmat} establishes that for any such $f$ and any number field $K/\Q$ the Hasse principle holds provided that the Brauer--Manin obstruction is empty.

Our main result is the following, which establishes the Hasse principle for a certain class of fields $K/\Q$, whose degree may be arbitrarily large.

\begin{thm}\label{mainthm}
Let $f\in\Z[x]$ be an irreducible cubic   and let $K/\Q$ be a number field satisfying the Hasse norm principle.  Suppose that there exists a prime $q\ge 7$ such that for all but finitely many unramified primes $p$ with 
$p\not\equiv 1\pmod q$
the prime ideal factorisation of $p$, 
$$(p)=\prod_{i=1}^r P_i,$$
consists of prime ideals $P_i$ of norms $p^{f_i}$ with $f_1,\ldots,f_r$ coprime.  In addition, assume that the number field generated by $f$ is not contained in the cyclotomic field $\Q(\zeta_q)$.  Then (\ref{eq:diophantine}) satisfies the Hasse principle.
\end{thm}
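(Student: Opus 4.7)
The plan is to exploit the Hasse norm principle for $K$, which is assumed, to reduce the theorem to exhibiting a single integer $t$ for which $f(t)\ne 0$ is a local norm from $K$ at every place of $\Q$. From the local solutions to (\ref{eq:diophantine}) supplied by the hypothesis, I would first collect a finite set $S$ of places containing the archimedean place, the primes ramified in $K/\Q$, the primes dividing the discriminant of $f$, and the finite exceptional set of unramified primes from the hypothesis. The given local solutions then assemble, by the Chinese Remainder Theorem, into a residue class $t\equiv t_0\pmod M$ such that $f(t)$ is a local norm at every place in $S$, for every integer $t$ in this class.

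The key structural observation is that at unramified primes $p\notin S$ with $p\not\equiv 1\pmod q$ no further constraint on $t$ is required. Indeed, the factorisation hypothesis gives $\gcd(f_1,\ldots,f_r)=1$, so the local norm group of $K\otimes\Q_p$ contains an element of $p$-adic valuation $1$; it also contains every unit, because each extension $K_{P_i}/\Q_p$ is unramified and its norm is surjective onto $\Z_p^\times$. Hence the local norm group is all of $\Q_p^\times$, so $f(t)$ is automatically a local norm at such $p$.

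The remaining, and decisive, task is to control primes $p\notin S$ with $p\equiv 1\pmod q$ dividing $f(t)$: writing $d_p$ for the gcd of the residue degrees of the primes of $K$ above $p$, the local norm group is all of $\Q_p^\times$ precisely when $d_p=1$, and otherwise requires $d_p\mid v_p(f(t))$. Here the strategy is to sieve the values $f(t)$ for $t\equiv t_0\pmod M$ with $|t|\le X$ so as to produce $t$ for which every prime $p\equiv 1\pmod q$ dividing $f(t)$ has $d_p=1$, i.e.\ a prescribed Frobenius in the Galois closure of $K$. The hypothesis that the field generated by $f$ is not contained in $\Q(\zeta_q)$ enters here: by Chebotarev applied to the compositum of $\Q(\zeta_q)$ with the Galois closures of $f$ and of $K$, the condition $p\equiv 1\pmod q$ is independent of the relevant Frobenius data, producing a positive density of primes which are simultaneously in the right residue class modulo $q$ and have $d_p=1$.

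I expect the main obstacle to be the execution of this sieve. Since $\deg f=3$ the values $f(t)$ are of size $X^3$, and to track both the residues of prime divisors modulo $q$ and their Frobenius classes one needs bilinear estimates of substantial strength. I would anticipate a Rosser--Iwaniec weighted sieve combined with Type~I and Type~II estimates for sums such as $\sum_{t\le X}\chi(f(t))$, where $\chi$ runs over Dirichlet characters modulo $q$ and over characters detecting the desired Frobenius classes, together with equidistribution results for $f(t)$ in arithmetic progressions to moduli approaching $X^{3-\varepsilon}$. Pushing the sieve past the natural cubic barrier is the essential analytic difficulty, and is presumably where the bulk of the paper's sieve machinery is deployed.
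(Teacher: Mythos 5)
Your overall framework---invoke the Hasse norm principle, observe that unramified primes whose residue degrees are coprime impose no local condition, fix congruence classes at the finitely many bad places, and then sieve to control the primes $p\equiv 1\pmod q$---is the right one, but there is a decisive gap at the sieve stage, and you have in fact located it yourself without resolving it. Sieving the one-variable values $f(t)$ for integers $t\le X$, which have size $X^3$, only yields a level of distribution $X^{1-\varepsilon}$, the cube root of the size of the values; no known technology approaches the ``moduli up to $X^{3-\varepsilon}$'' you ask for, and Type I/II estimates for sums like $\sum_{t\le X}\chi(f(t))$ do not repair this. The paper's essential move, which is absent from your proposal, is to homogenize: one seeks a rational point $t=a/b$, and since the norm is multiplicative it suffices to make \emph{both} $b$ and the binary form $f(a,b)=b^3f(a/b)$ norms from $K$. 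One then sieves the product $bf(a,b)$ over $(a,b)\in(0,N]^2$, where Daniel-style lattice-point counting gives a level of distribution $D_1D_2\le N^{2-\eta}$---essentially the square root of the size of the values---and the remaining large prime divisors are handled by a Buchstab decomposition together with a switching argument exploiting the bilinear structure $f(a,b)=pr$. Without passing to the binary form the sieve cannot close.

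A secondary divergence: the paper does not try to force the primes $p\equiv1\pmod q$ dividing the values to have $d_p=1$; it simply removes them \emph{all}, which turns the problem into a small sieve of dimension $\tfrac{2}{q-1}$. Since this is below the sifting limit $\tfrac12$ exactly when $q>5$, this is where the hypothesis $q\ge7$ enters. Your variant---demanding a prescribed Frobenius in the Galois closure of $K$ for each prime divisor $p\equiv1\pmod q$ of $f(t)$---is not a standard sieve problem and buys nothing. Relatedly, the hypothesis that the field $L$ generated by $f$ is not contained in $\Q(\zeta_q)$ is used in the paper not merely for a qualitative Chebotarev independence statement but quantitatively: via the regularity at $s=1$ of the twisted Hecke $L$-functions $\zeta_L(s,\chi)$ one shows $\sum_{p\le x,\,p\equiv1\,(q)}\nu(p)/p\sim\frac{1}{q-1}\log\log x$, where $\nu(p)$ counts roots of $f$ modulo $p$; this is what fixes the sieve dimension at $\tfrac{2}{q-1}$ rather than something larger (as happens, for instance, when $L$ is a cubic subfield of $\Q(\zeta_q)$).
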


An example of a field $K/\Q$ satisfying all the conditions of this theorem can be found by adjoining to $\Q$ a root of 
$$x^q-2,$$
for any prime $q\geq 7$.  
Since $[K:\Q]=q$ is prime, the extension $K/\Q$ satisfies the Hasse norm principle by the work of Bartels \cite{bartels-81}. In addition, for any prime $p\not\equiv 1\pmod q$ the equation 
$$x^q-2\equiv 0\pmod p$$ 
has a root.  If we exclude finitely many values of $p$ it then follows that $K$ has an integral ideal of norm $p$.  This clearly implies that the degrees $f_i$ are coprime.  In conclusion, for this choice of $K$ and any $f$ which does not generate a subfield of $\Q(\zeta_q)$, (\ref{eq:diophantine}) satisfies the Hasse principle.

After various algebraic reductions we will prove Theorem \ref{mainthm} using sieve methods.  We will show that for an integer to be a norm from $K/\Q$ it is sufficient that it satisfies certain congruences and that it has no prime factors $p\equiv 1\pmod q$.  We may therefore estimate the number of norms in a set of integers by sieving out these primes.  Our sieve problem will have dimension $\frac{2}{q-1}$.  For large $q$ this is close to $0$ and therefore both the upper and lower bounds coming from the sieve are close to the truth.  We will show that for $q\geq 7$ the losses in the sieve are sufficiently small to give us a positive lower bound for the number of rational points on (\ref{eq:diophantine}).  There are many well-known applications of sieves whose dimension is an integer or $\frac12$.  However we are not aware of any existing work which uses a sieve whose dimension is between $0$ and $\frac12$.

It seems very likely that the method of this paper can be adapted to prove weak approximation for the variety (\ref{eq:diophantine}) provided that it can be shown that weak approximation holds for the equation 
$$N_{K/\Q}(u)=1.$$
As part of our proof we will show that $p$-adic conditions, for finitely many primes $p$, can be imposed on the variable $t$.  To handle the infinite place our sieve would have to be modified: sieving a more general region instead of $[0,N]^2$.  This modification would enable us to find a rational solution, $(t,x_1,\ldots,x_k)$, to (\ref{eq:diophantine}) with the variable $t$ sufficiently close to any idelic point.  If we define $x\in K$ by 
$$x=x_1\omega_1+\ldots+x_k\omega_k.$$
then, for any $u\in K$ with $N_{K/\Q}(u)=1$, we can write 
$$ux=y_1\omega_1+\ldots+y_k\omega_k$$
and we have 
$$f(t)=\bN(y_1,\ldots,y_k).$$
It could be shown, using our assumption that weak approximation holds for $N_{K/\Q}(u)=1$, that we can choose a $u$ to make $(y_1,\ldots,y_k)$ sufficiently close to any idelic point.  

We decided to restrict to the case that $q$ is prime to simplify some of the technical details in the sieve.  It seems probable that the argument could work for composite $q$,  however the condition $q\geq 7$ would have to be changed as our bounds would involve the value of $\varphi(q)$.  We use the assumption that $f$ does not generate a subfield of $\Q(\zeta_q)$ to avoid any correlation between the splitting of primes in the number field $K/\Q$ and in the field generated by $f$.  This will be made precise in Lemma \ref{hecke}.  Observe that if $q\equiv 2\pmod 3$ then this condition is satisfied for all cubics $f$ as $\Q(\zeta_q)$ cannot contain a subfield of degree $3$.  

\subsection*{Acknowledgements}

This work was completed as part of my DPhil, for which I was funded by EPSRC grant EP/P505666/1. I am very grateful to the
EPSRC for funding me and to my supervisor, Roger Heath-Brown, for all
his valuable help and advice.

\section{Algebraic Reduction of the Problem}

It does not matter which norm form $\bN$ we choose as they are all equivalent under a linear change of variables defined over $\Q$.  In particular we may assume that $\bN\in \Z[x_1,\ldots,x_k]$.   As we eventually wish to apply sieve methods we reduce from a problem over $\Q$ to one over $\Z$.  We therefore let $f(a,b)$ denote the homogeneous form of $f$:  
$$f(a,b)=b^3f(\frac{a}{b}).$$

\begin{lem}\label{homogeneous}
Suppose there exist integers $a$ and $b$ for which 
$$b,f(a,b)\in N_{K/\Q}(K^*).$$
There is then a solution to (\ref{eq:diophantine}) over $\Q$.
\end{lem}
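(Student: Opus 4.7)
\medskip

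The plan is to take $t = a/b$ as the candidate rational point and show that $f(t)$ is automatically a norm from $K/\Q$, using that the set $N_{K/\Q}(K^*)$ is a subgroup of $\Q^*$.

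First I would note that the hypothesis $b \in N_{K/\Q}(K^*)$ forces $b \ne 0$, since norms of nonzero elements of $K$ are nonzero. Thus $t := a/b$ is a well-defined rational number. Then by definition of the homogenisation,
$$f(t) = f(a/b) = \frac{f(a,b)}{b^3}.$$
Since $b$ is a norm, so is $b^3$, and since $N_{K/\Q}(K^*)$ is a multiplicative subgroup of $\Q^*$, the quotient $f(a,b)/b^3$ is also a norm. Hence there exists $y \in K^*$ with $N_{K/\Q}(y) = f(t)$. Expanding $y$ in the $\Q$-basis $\omega_1,\ldots,\omega_k$ as $y = x_1\omega_1 + \cdots + x_k\omega_k$ with $x_i \in \Q$ produces the required $(t,x_1,\ldots,x_k) \in \Q^{k+1}$ satisfying $f(t) = \bN(x_1,\ldots,x_k)$.

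Finally I would verify the non-vanishing condition $f(t) \ne 0$. Because $f$ is an irreducible cubic in $\Z[x]$, it has no rational roots, so $f(a/b) \ne 0$; equivalently the $y$ constructed above is nonzero, so $(x_1,\ldots,x_k) \ne 0$ and $\bN(x_1,\ldots,x_k) \ne 0$.

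There is no real obstacle here: the lemma is essentially the observation that norms form a group together with the fact that irreducible cubics over $\Z$ have no rational roots. The only mild subtlety is ensuring $b \ne 0$ so that dehomogenisation is legitimate, which is immediate from $b$ being a nonzero norm.
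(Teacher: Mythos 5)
Your proof is correct and follows essentially the same route as the paper's: note $b\ne 0$, write $f(a/b)=b^{-3}f(a,b)$, use that $N_{K/\Q}(K^*)$ is a multiplicative subgroup of $\Q^*$ to conclude this is a norm, and use irreducibility of $f$ to get non-vanishing. The only difference is that you spell out the dehomogenisation and the expansion of $y$ in the basis slightly more explicitly, which is fine.
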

\begin{proof}
Clearly $b\ne 0$.  We have 
$$f(\frac{a}{b})=b^{-3}f(a,b).$$
This is a norm from $K$ since both $b$ and $f(a,b)$ are, and the norm map is multiplicative.  In addition $f(\frac{a}{b})\ne 0$ since $f$ is irreducible.
\end{proof}

We know that the Hasse norm principle holds for $K/\Q$.  This means that a nonzero $a\in\Q$ is a norm from $K$ if and only if it is a norm from the group of ideles $I_K$:
$$\Q^*\cap N_{K/\Q}(I_K)=N_{K/\Q}(K^*).$$ 
Let $V$ denote the set of all places of $K$.  To show that $a\in\Q^*$ is a norm from $K$ it is thus sufficient to construct a sequence $(x_v)_{v\in V}$, where $x_v$ is a nonzero element of $K_v$,  with the following two properties:

\begin{enumerate}
\item For all non-Archimedean places $v$, with finitely many exceptions, we have $x_v\in R_v^*$, where $R_v$ is the valuation ring of $K_v$.  This condition ensures that $(x_v)\in I_K$.

\item For all places $w$ of $\Q$ we have 
$$\prod_{v|w}N_{K_v/\Q_w}(x_v)=a.$$
\end{enumerate}

We will derive arithmetic conditions which are sufficient to show that a nonzero integer $a$ is in $N_{K/\Q}(K^*)$.

\begin{lem}\label{unramified}
Suppose $a\ne 0$ is an integer.  Let $p$ be a prime which does not divide $a$ and which is unramified in $K/\Q$. Then there exist elements $x_v\in R_v^*$, for each place $v$ above $p$, such that 
$$\prod_{v|p}N_{K_v/\Q_p}(x_v)=a.$$
\end{lem}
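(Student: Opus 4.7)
The plan is to reduce the problem to the single fact that for an unramified extension of $p$-adic local fields, the norm map restricted to unit groups is surjective. Once this is available, the lemma is almost immediate: I only need to use one place above $p$ to absorb all of $a$ and set $x_v=1$ at the others.

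In detail, I would start by noting that because $p\nmid a$ and $a\in\Z$, we have $a\in\Z_p^*$. Next I would pick any single place $v_0$ of $K$ above $p$; since $p$ is unramified in $K/\Q$, the completion $K_{v_0}/\Q_p$ is an unramified extension of local fields, with valuation ring $R_{v_0}$. I then want to invoke the statement that
$$N_{K_{v_0}/\Q_p}\colon R_{v_0}^*\longrightarrow \Z_p^*$$
is surjective. Granting this, choose $x_{v_0}\in R_{v_0}^*$ mapping to $a$, and set $x_v=1\in R_v^*$ for every other place $v$ of $K$ above $p$; then
$$\prod_{v|p}N_{K_v/\Q_p}(x_v)=N_{K_{v_0}/\Q_p}(x_{v_0})=a,$$
which is precisely what is required.

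The only non-formal step is the surjectivity of the norm on local units in the unramified case. I would justify it in the standard two-layer way: on the residue field, the map reduces to the norm $\mathbb{F}_{p^{f_{v_0}}}^*\to\mathbb{F}_p^*$, which is surjective because the norm between finite fields is always surjective; and on the higher unit groups $U^{(n)}/U^{(n+1)}$, the norm is conjugate to the trace on the residue field extension, which is also surjective since the extension is separable. A successive-approximation argument then lifts any target unit in $\Z_p^*$ to a preimage in $R_{v_0}^*$.

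I expect no real obstacle beyond correctly quoting (or sketching) the surjectivity of the unramified local norm on units; everything else is bookkeeping. One small point to be careful about is that this lemma only asserts existence at the places above a single prime $p$, so there is no need to engineer global compatibility here—condition 1 from the preceding discussion will be verified once analogous statements are proved at all but finitely many primes, and condition 2 is taken care of at $p$ by the displayed product above.
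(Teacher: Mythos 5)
Your proposal is correct and follows essentially the same route as the paper: choose a single place above $p$, use the fact that for an unramified local extension every unit of $\Z_p$ is a norm (the paper cites this from local class field theory and then checks the preimage is a unit via the absolute value, while you sketch the standard residue-field/higher-unit-group filtration argument for the same fact), and set $x_v=1$ at the remaining places. No gaps.
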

 \begin{proof}
Let $v_1$ be one of the places above $p$.  Since $p$ is unramified in $K/\Q$ we know that the extension $K_{v_1}/\Q_p$ is unramified.  Furthermore, $p\nmid a$ so $a\in \Z_p^*$.  It follows by local class field theory, (for example Gras \cite[Corollary 1.4.3, part
(ii), page 75]{gras}), that there exists $x_{v_1}\in K_{v_1}$ with 
$$N_{K_{v_1}/\Q_p}(x_{v_1})=a.$$
We must have $x_{v_1}\in R_{v_1}^*$ since 
$$|x_{v_1}|=|N_{K_{v_1}/\Q_p}(x_{v_1})|^{1/[K_{v_1}:\Q_p]}=|a|^{1/[K_{v_1}:\Q_p]}=1.$$
For all $v|p$ with $v\ne v_1$ we define $x_v=1$ so 
$$N_{K_v/\Q_p}(x_v)=1.$$
The result follows.
\end{proof}

For any fixed $a$ this lemma has dealt with all but a finite number of places.  It follows that for the remaining places we need not consider the condition $x_v\in R_v^*$.

\begin{lem}\label{normp}
Suppose $a\ne 0$ is an integer.  Let $p$ be a prime dividing $a$ which is unramified in $K/\Q$.  In addition suppose that in the prime ideal factorisation 
$$(p)=\prod P_i,$$
with $N(P_i)=p^{f_i}$, the various $f_i$ are coprime.
 It follows that there exist $x_v\in K_v^*$, for each $v|p$, with 
$$\prod_{v|p}N_{K_v/\Q_p}(x_v)=a.$$
\end{lem}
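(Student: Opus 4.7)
The plan is to reduce the problem to two standard facts about unramified local extensions: that the local norm map is surjective on units, and that the norm multiplies $p$-adic valuations by the residue degree. The coprimality hypothesis on the $f_i$ will then be used to match the valuation of $a$ using a $\Z$-linear combination of the $f_i$.

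For each $v_i \mid p$ corresponding to $P_i$, the extension $K_{v_i}/\Q_p$ is unramified of degree $f_i$, so $p$ itself is a uniformizer of $K_{v_i}$. I would look for $x_{v_i}$ of the form $x_{v_i} = p^{e_i} u_i$ with $e_i \in \Z$ and $u_i \in R_{v_i}^*$. Then $N_{K_{v_i}/\Q_p}(x_{v_i}) = p^{e_i f_i} N_{K_{v_i}/\Q_p}(u_i)$, so
$$\prod_{i=1}^r N_{K_{v_i}/\Q_p}(x_{v_i}) = p^{\sum_i e_i f_i} \prod_{i=1}^r N_{K_{v_i}/\Q_p}(u_i).$$
Writing $m = v_p(a) \ge 1$ and $a = p^m a'$ with $a' \in \Z_p^*$, it suffices to solve the two independent problems
$$\sum_{i=1}^r e_i f_i = m, \qquad \prod_{i=1}^r N_{K_{v_i}/\Q_p}(u_i) = a'.$$

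For the first equation, the hypothesis that $\gcd(f_1,\ldots,f_r) = 1$ lets me apply Bézout to produce integers $e_i$ (possibly negative, which is fine because the lemma only requires $x_v \in K_v^*$, not in $R_v^*$). For the second, I would invoke the fact that for an unramified extension the local norm map $R_{v_i}^* \to \Z_p^*$ is surjective; this follows via Hensel's lemma from the surjectivity of the norm $\mathbb{F}_{p^{f_i}}^* \to \mathbb{F}_p^*$. Thus I can take $u_1 \in R_{v_1}^*$ with $N(u_1) = a'$ and set $u_i = 1$ for $i \ge 2$.

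Neither step is really a main obstacle: the only place the hypothesis of the lemma enters nontrivially is the coprimality of the $f_i$, which is precisely what makes the Bézout step succeed for an \emph{arbitrary} $m$. If the $\gcd$ were $d > 1$, then only $a$ with $d \mid v_p(a)$ would lie in the image, so this coprimality assumption is doing exactly the work one would expect. The rest is a direct computation verifying that the chosen $x_{v_i}$ satisfy the required identity.
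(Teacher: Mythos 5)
Your proof is correct, and the essential input is the same as in the paper: Bézout's identity applied to the coprime residue degrees $f_i$. The implementation differs, though, in a way worth noting. You decompose $a = p^m a'$ and handle the uniformizer part and the unit part separately, which forces you to invoke the surjectivity of the norm map $R_{v_i}^* \to \Z_p^*$ for unramified extensions (via Hensel's lemma or local class field theory). The paper avoids this entirely: since $a \in \Q_p$ and $[K_{v_i}:\Q_p] = f_i$, one has $N_{K_{v_i}/\Q_p}(a) = a^{f_i}$, so choosing integers $k_i$ with $\sum k_i f_i = 1$ and setting $x_{v_i} = a^{k_i}$ gives $\prod_i N_{K_{v_i}/\Q_p}(x_{v_i}) = a^{\sum k_i f_i} = a$ in one line. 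Applying Bézout to the full element $a$ rather than just to its valuation $m$ makes the unit-norm surjectivity unnecessary here (the paper does use that fact, but only in the preceding lemma on primes not dividing $a$). Your version is slightly heavier but has the minor virtue of making explicit that only the valuation of $a$ is constrained by the $f_i$, which is the observation behind your correct remark about what happens when $\gcd(f_1,\ldots,f_r) = d > 1$.
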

\begin{proof}
Let $v_i$ be the place corresponding to the prime ideal $P_i$ in the factorisation of $(p)$.  We have $[K_{v_i}:\Q_p]=f_i$ so 
$$N_{K_{v_i}/\Q_p}(a)=a^{f_i}.$$  
Since the $f_i$ are coprime there exist integers $k_i$ such that 
$$\sum k_if_i=1.$$
The result follows on taking $x_{v_i}=a^{k_i}$.
\end{proof}

It remains to deal with the primes $p$ which ramify in $K/\Q$. For such primes it is easier to interpret the idelic condition in terms of the solubility of the norm equation over $\Q_p$.

\begin{lem}\label{equivalentlocal}
Let $a\ne 0$ be an integer.  Suppose there exists $x_1,\ldots,x_k\in\Q_p$ such that 
$$a=\bN(x_1,\ldots,x_k).$$
Then there exist $x_v\in K_v^*$, for $v|p$, such that 
$$\prod_{v|p}N_{K_v/\Q_p}(x_v)=a.$$
\end{lem}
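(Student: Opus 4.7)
The plan is to use the fact that the $\Q_p$-algebra $K\otimes_\Q \Q_p$ decomposes as $\prod_{v\mid p} K_v$, and that under this decomposition the (extended) norm map $N_{K/\Q}\otimes \Q_p$ agrees with the product of the local norms $\prod_{v\mid p}N_{K_v/\Q_p}$. Once this structural statement is in hand, the lemma is just a matter of identifying the element $x_1\omega_1+\ldots+x_k\omega_k$ on both sides.

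More concretely, I would first form the element
$$x=x_1\omega_1+\ldots+x_k\omega_k\in K\otimes_\Q\Q_p,$$
using that $\omega_1,\ldots,\omega_k$ remains a $\Q_p$-basis of $K\otimes_\Q\Q_p$. Via the canonical isomorphism $K\otimes_\Q\Q_p\cong\prod_{v\mid p}K_v$ I would let $x_v\in K_v$ denote the $v$-component of $x$. The second step is to recall the standard identity
$$N_{K/\Q}(y)=\prod_{v\mid p}N_{K_v/\Q_p}(y_v),$$
which holds for $y\in K$ with $v$-components $y_v$; since both sides are polynomial expressions in the coordinates of $y$ relative to the basis $\omega_1,\ldots,\omega_k$, and the identity holds on the dense subset $K\subset K\otimes_\Q\Q_p$, it automatically extends to all elements of $K\otimes_\Q\Q_p$. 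Applied to our $x$ this gives
$$\prod_{v\mid p}N_{K_v/\Q_p}(x_v)=\bN(x_1,\ldots,x_k)=a.$$

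Finally, since $a\ne 0$, each local norm $N_{K_v/\Q_p}(x_v)$ is a factor of a nonzero element of $\Q_p$, hence itself nonzero, so $x_v\in K_v^*$ for every $v\mid p$ and the lemma is proved.

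The only mildly subtle step is the compatibility of the global and local norms, i.e.\ the identity $N_{K/\Q}=\prod_{v\mid p}N_{K_v/\Q_p}$ after base change to $\Q_p$; but this is a standard fact (see e.g.\ any textbook treatment of completions of number fields), and once invoked the rest of the argument is purely formal. So there is no genuine obstacle here—the lemma is essentially a bookkeeping statement that reinterprets a $\Q_p$-solution to the norm equation as an idelic datum above $p$.
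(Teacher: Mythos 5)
Your proposal is correct and follows essentially the same route as the paper: both rest on the product formula $N_{K/\Q}(y)=\prod_{v\mid p}N_{K_v/\Q_p}(y_v)$ for $y\in K$ and extend it to $\Q_p$-coefficients by density (the paper via an explicit sequence in $\Q^k$ converging $p$-adically and continuity of the norms, you via the observation that both sides are polynomial in the coordinates and agree on the dense subset $K\subset K\otimes_\Q\Q_p$). Your tensor-product packaging and the nonvanishing argument for each $x_v$ are both fine.
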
  
\begin{proof}
Since $a\ne 0$ we know that 
$$(x_1,\ldots,x_k)\ne 0.$$

Let $x^{(n)}$ be a sequence in $\Q^k$ which converges $p$-adically to $(x_1,\ldots,x_k)$.  Let $\omega_1,\ldots,\omega_k$ be the basis of $K/\Q$ used to construct the norm form $\bN$ and define $y^{(n)}\in K$ by 
$$y^{(n)}=x^{(n)}_1\omega_1+\ldots +x^{(n)}_k \omega_k.$$
For each $v|p$ write $y^{(n)}_v$ for the image of $y^{(n)}$ under the embedding of $K$ into $K_v$.
The sequence $y^{(n)}_v$ converges to some $x_v\in K_v^*$.  

We now have 
$$\prod_{v|p}N_{K_v/\Q_p}(x_v)=\lim_{n\rightarrow \infty}\prod_{v|p}N_{K_v/\Q_p}(y^{(n)}_v).$$
However, since $y^{(n)}\in K$ it follows, (for example by Gras \cite[Proposition 2.2, page 93]{gras}), that 
$$\prod_{v|p}N_{K_v/\Q_p}(y^{(n)}_v)=N_{K/\Q}(y^{(n)})=\bN(x^{(n)}_1,\ldots, x^{(n)}_k).$$
We conclude, by continuity of $\bN$, that 
$$\prod_{v|p}N_{K_v/\Q_p}(x_v)=\bN(x_1,\ldots,x_k)=a.$$
\end{proof}

\begin{lem}\label{badprimes}
Let $p$ be a prime for which (\ref{eq:diophantine}) has a solution in $\Q_p$.  There exist $a_0,b_0\in\Z$ and $l\in\N$, all depending on $p$, satisfying 
$$b_0,f(a_0,b_0)\ne 0\pmod{p^l},$$
such that for any $a,b\in\Z$ with 
$$a\equiv a_0\pmod{p^l}\text{ and }b\equiv b_0\pmod{p^l}$$
we have 
$$b,f(a,b)\in \bN(\Q_p^k)\setminus\{0\}.$$   
\end{lem}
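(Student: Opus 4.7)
The plan is to exploit two facts: that $N_p:=\bN(\Q_p^k)\setminus\{0\}$ is an open subgroup of finite index in $\Q_p^*$, and the homogeneity identity $f(\lambda a,\lambda b)=\lambda^{3}f(a,b)$. The openness of $N_p$ follows from local class field theory applied to the \'etale algebra $K\otimes_\Q\Q_p\cong\prod_{v\mid p}K_v$: the image of the full norm map is the product, inside $\Q_p^*$, of the open finite-index subgroups $N_{K_v/\Q_p}(K_v^*)$.

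First I would translate the hypothesis into $f(t_0)=\bN(y_0)\in N_p$ and write $t_0=\tilde a/\tilde b$ with $\tilde a,\tilde b\in\Z_p$, $\tilde b\neq 0$; then $f(\tilde a,\tilde b)=\tilde b^{3}f(t_0)$. The key move is to rescale by a suitable $\lambda\in\Z_p$ so that $\lambda\tilde b\in N_p$. Once this is done,
$$f(\lambda\tilde a,\lambda\tilde b)=(\lambda\tilde b)^{3}f(t_0)\in N_p\cdot N_p=N_p,$$
so the point $(\lambda\tilde a,\lambda\tilde b)\in\Z_p^{2}$ satisfies both required norm conditions simultaneously.

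The existence of $\lambda$ is the only substantive point. The image $v_p(N_p)\subseteq\Z$ is a nontrivial subgroup (since, for any place $v\mid p$, $p^{[K_v:\Q_p]}=N_{K_v/\Q_p}(p)$ lies in $N_p$), so $N_p$ contains elements $\pi'$ of arbitrarily large $p$-adic valuation along an arithmetic progression; picking any such $\pi'$ of valuation $\geq v_p(\tilde b)$ and setting $\lambda=\tilde b^{-1}\pi'$ yields $\lambda\in\Z_p$ with $\lambda\tilde b=\pi'\in N_p$.

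Finally, I would approximate $(\lambda\tilde a,\lambda\tilde b)\in\Z_p^{2}$ by integers $(a_0,b_0)\in\Z^{2}$ to very high $p$-adic precision. For $l$ sufficiently large, the openness of $N_p$ together with continuity of the polynomial $f(a,b)$ ensure that $b\in N_p$ and $f(a,b)\in N_p$ for every $(a,b)\equiv(a_0,b_0)\pmod{p^{l}}$; enlarging $l$ further to exceed $v_p(\lambda\tilde b)$ and $v_p(f(\lambda\tilde a,\lambda\tilde b))$ also forces $b_0,f(a_0,b_0)\not\equiv 0\pmod{p^{l}}$. An application of Lemma \ref{equivalentlocal} then converts these norm conditions into the idelic factorisations required in the main argument. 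The main obstacle is the rescaling step above; everything else is a routine $p$-adic continuity and openness argument.
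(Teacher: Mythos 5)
Your proposal is correct and follows essentially the same route as the paper: rescale the $p$-adic solution so that the denominator lies in $\bN(\Q_p^k)\setminus\{0\}$, then use openness of that set and continuity of $f$ to pass to a congruence class $\pmod{p^l}$ of integer points. The only cosmetic difference is the choice of scalar: the paper simply replaces $(a_1,b_1)$ by $(b_1^{k-1}a_1,b_1^{k})$, so that the new second coordinate $b_1^{k}=N_{K/\Q}(b_1)$ is automatically a norm, whereas you extract an element of suitable valuation from the norm group; both work.
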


\begin{proof}
For the duration of this proof let 
$$N=\bN(\Q_p^k)\setminus \{0\}.$$

By assumption there exist $a_1,b_1\in\Z_p$ with $b_1\ne 0$ such that 
$$f(\frac{a_1}{b_1})\in N.$$
By replacing $(a_1,b_1)$ by $(b_1^{k-1}a_1,b_1^k)$ we may assume that 
$b_1\in N$
and therefore
$f(a_1,b_1)\in N$.
The set $N$ is open and $f$ is continuous with respect to the $p$-adic topology.  It follows that there exists $\delta>0$ such that for any $a,b\in \Z_p$ with 
\begin{equation}\label{hyp}
|a-a_1|,|b-b_1|<\delta
\end{equation}
we have 
$$b,f(a,b)\in N.$$
For an $l\in\N$ which is sufficiently large in terms of $\delta$ the hypotheses (\ref{hyp}) are equivalent to 
$$a\equiv a_1\pmod {p^l},\;\; b\equiv b_1\pmod {p^l}.$$
The result follows on taking $a_0,b_0\in\Z$ which are congruent modulo $p^l$ to $a_1,b_1$.  Since 
$$b_1,f(a_1,b_1)\ne 0$$
we can guarantee that 
$$b_0,f(a_0,b_0)\ne 0\pmod{p^l}$$
provided $l$ is large enough.  
\end{proof}

We may now use all the previous lemmas to reduce our original problem to one involving prime divisors of $b$ and $f(a,b)$.

\begin{lem}
Suppose that (\ref{eq:diophantine}) has solutions in every $\Q_p$ and in $\R$.  Let $\p_1$ be a finite set of primes which contains the ramified primes in $K/\Q$ as well as the finitely many primes $p\not\equiv 1\pmod q$ for which the degrees $f_i$ are not coprime.  Then there exists a $\Delta\in\N$, divisible only by primes in $\p_1$, and integers $a_0,b_0$ such that if $p\in\p_1$ and $p^l$ is the maximal power of $p$ dividing $\Delta$ then 
$$b_0,f(a_0,b_0)\not\equiv 0\pmod{p^l}$$
and the following implication is true.

Suppose that $a,b$ are integers for which the following hold:

\begin{enumerate}
\item We have 
\begin{equation}\label{congruence}
a\equiv a_0\pmod \Delta\text{ and }b\equiv b_0\pmod \Delta.
\end{equation}

\item Each prime $p$ with $p|bf(a,b)$ and $p\notin \p_1$ satisfies 
$$p\not\equiv 1\pmod q.$$

\item We have $b\geq 0$ and $f(a,b)\geq 0$.
\end{enumerate} 
Then (\ref{eq:diophantine}) has a solution over $\Q$.
\end{lem}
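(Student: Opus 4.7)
The plan is to use Lemma \ref{homogeneous} to reduce to showing $b,f(a,b)\in N_{K/\Q}(K^*)$. Since $K/\Q$ satisfies the Hasse norm principle, it suffices to exhibit, for each $c\in\{b,f(a,b)\}$, an idele $(x_v)\in I_K$ with $\prod_{v\mid w}N_{K_v/\Q_w}(x_v)=c$ at every place $w$ of $\Q$. The construction of $(x_v)$ will be local, and the conditions (1), (2), (3) in the statement are precisely tailored so that at each place one of the previous lemmas applies.

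To construct $\Delta,a_0,b_0$, I would invoke Lemma \ref{badprimes} at each $p\in\p_1$. The hypothesis that (\ref{eq:diophantine}) has a $\Q_p$-point lets the lemma supply $(a_{0,p},b_{0,p},l_p)$ with $b_{0,p},f(a_{0,p},b_{0,p})\not\equiv 0\pmod{p^{l_p}}$, such that any $a\equiv a_{0,p}$, $b\equiv b_{0,p}\pmod{p^{l_p}}$ satisfies $b,f(a,b)\in\bN(\Q_p^k)\setminus\{0\}$. Setting $\Delta:=\prod_{p\in\p_1}p^{l_p}$ and combining the $(a_{0,p},b_{0,p})$ via the Chinese Remainder Theorem produces $a_0,b_0\in\Z$ with the required non-vanishing modulo each $p^{l_p}$.

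Given $a,b$ satisfying (1)--(3) and fixing $c\in\{b,f(a,b)\}$, I would assemble $(x_v)$ on the finite places by splitting into three classes. For $v\mid p$ with $p\in\p_1$, (1) and Lemma \ref{badprimes} give $c\in\bN(\Q_p^k)\setminus\{0\}$, and Lemma \ref{equivalentlocal} produces $x_v\in K_v^*$ with the right local norm. For $v\mid p$ with $p\notin\p_1$ and $p\nmid c$, the prime $p$ is unramified and Lemma \ref{unramified} supplies $x_v\in R_v^*$. For $v\mid p$ with $p\notin\p_1$ and $p\mid c$, (2) forces $p\not\equiv 1\pmod q$, so by the definition of $\p_1$ the residue degrees $f_i$ at $p$ are coprime and Lemma \ref{normp} applies. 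At the infinite place, (3) is precisely what permits a valid choice: in the totally complex case the product of local norms at infinity lies in $\R_{\geq 0}$ and one needs $c\geq 0$, while if $K$ has a real embedding the sign can be freely adjusted at a real place. Non-vanishing of $c$, required for the Hasse norm principle, follows from (1) combined with $b_0,f(a_0,b_0)\not\equiv 0\pmod{p^{l_p}}$. Since $x_v\in R_v^*$ off a finite set of places, $(x_v)\in I_K$; the Hasse norm principle then gives $c\in N_{K/\Q}(K^*)$, and Lemma \ref{homogeneous} concludes.

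The main obstacle is purely organisational: verifying that the three classes of finite places partition every prime of $K$ cleanly, and that the outputs of the four lemmas slot together into a single idele whose norm recovers $c$. The only genuinely non-formal step is the short case-split at the Archimedean place, which is exactly why condition (3) appears in the statement.
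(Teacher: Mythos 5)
Your proposal is correct and follows essentially the same route as the paper: reduce via Lemma \ref{homogeneous} and the Hasse norm principle, build $\Delta,a_0,b_0$ from Lemma \ref{badprimes} and the Chinese Remainder Theorem, and then handle the three classes of finite places with Lemmas \ref{equivalentlocal}, \ref{unramified} and \ref{normp} respectively, with condition (3) disposing of the infinite place. The only difference is that you elaborate the Archimedean case slightly more than the paper, which simply notes that nonnegative rationals are always local norms at infinity.
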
 
\begin{proof}
By Lemma \ref{homogeneous} it is sufficient to show that 
$$b,f(a,b)\in N_{K/\Q}(K^*).$$
By the Hasse norm principle this is equivalent to showing that 
$$b,f(a,b)\in N_{K/\Q}(I_K).$$
We must therefore show, for all places of $\Q$, that $b$ and $f(a,b)$ are products of local norms.

\begin{enumerate}
\item For each prime $p\in \p_1$ we may use Lemma \ref{badprimes} to construct $l_p,a_{0,p},a_{1,p}$. These will satisfy
$$b_{0,p},f(a_{0,p},b_{0,p})\not\equiv0\pmod{p^{l_p}}.$$
We now let 
$$\Delta=\prod_{p\in\p_1}p^{l_p}$$
and use the Chinese remainder theorem to construct $a_0,b_0$ satisfying 
$$a_0\equiv a_{0,p}\pmod {p^{l_p}},\;\; b_0\equiv b_{0,p}\pmod {p^{l_p}}$$
for all $p\in\p_1$.  It follows by our assumption (\ref{congruence}) and Lemma \ref{badprimes} that 
$$b,f(a,b)\in \bN(\Q_p^k)\setminus\{0\}.$$
We conclude, using Lemma \ref{equivalentlocal}, that $b$ and $f(a,b)$ are suitable products of local norms for all primes in $\p_1$.

\item For primes not in $\p_1$ we know that either $p\nmid b$, in which case we use Lemma \ref{unramified} to write $b$ as a suitable product of local norms, or $p|b$. In the latter situation $p\not\equiv 1\pmod q$ and therefore the degrees of the prime ideals above $p$ are coprime.  The required local condition for $b$ now follows by Lemma \ref{normp}.  We may use an identical argument for $f(a,b)$.

\item Finally we consider the infinite place.  Since $b,f(a,b)\geq 0$ they are both local norms at infinity.
\end{enumerate}
The above cases cover all the places of $\Q$ so the result follows.
\end{proof}

For the remainder of this paper we let $\p_1$ be a finite set of primes including those which are ramified in $K/\Q$ or which divide the coefficients of $a^3$ or $b^3$ in the polynomial $f(a,b)$ or which divide the discriminant of $f$.  In addition $\p_1$ will contain those primes $p\not\equiv 1\pmod q$ for which the degrees $f_i$ are not coprime.  We also include in $\p_1$ the prime $q$ and all  primes up to some absolute constant $P_1$, (which will be determined in Lemma \ref{notsquarefree} below).  We let $a_0,b_0,\Delta$ be the quantities constructed in the last lemma and use the notation $C(a,b)$ to denote that $a,b$ satisfy (\ref{congruence}).  

Since $f$ is a cubic, we can, without loss of generality, apply a linear change of variable over $\Q$ to guarantee that its leading coefficient is positive and all its real roots are negative.  We may thus assume that if $x>0$ then $f(x)>0$.  In particular, if $a,b>0$ then $f(a,b)>0$.  
For a large $N$ we will apply a sieve to count pairs $(a,b)\in (0,N]^2$ satisfying $C(a,b)$ for which $bf(a,b)$ has no prime factor $p\notin\p_1$ with $p\equiv 1\pmod q$.  If we can prove a positive lower bound for this quantity then it follows by the last lemma that (\ref{eq:diophantine}) has a solution over $\Q$.

\section{Levels of Distribution}

We need various level of distribution results for the values $bf(a,b)$. All implied constants in this section may depend on the polynomial $f$ and on $\Delta$. 

The main result of this section, Lemma \ref{mainlod}, is proved using very similar methods to those of  Daniel, \cite[Lemmas 3.2 and 3.3]{daniel}. We extend his results to handle the form $bf(a,b)$, rather than $f(a,b)$,  with $a,b$ in a fixed arithmetic progression.  Let $\cR$ be a compact region of $\R^2$.  We begin by considering the quantity 
$$R^*(\cR,d_1,d_2)=\#\{(a,b)\in \cR:C(a,b),(a;b;d)=1,d_1|f(a,b),d_2|bf(a,b)\}.$$
Throughout this paper we are using $(a;b)$ to denote the greatest common factor.  We need only consider $R^*(\cR,d_1,d_2)$ for $d_1,d_2\in\N$ satisfying 
$$(d_1;d_2)=(d_1d_2;\Delta)=1.$$
We will write $d=d_1d_2$.

We say that points $(a_1,b_1),(a_2,b_2)$ with 
$$(a_1;b_1;d)=(a_2;b_2;d)=1$$
are equivalent modulo $d$ if 
$$(a_1,b_1)\equiv \lambda(a_2,b_2)\pmod d$$
for some $\lambda\in\Z$ which must necessarily satisfy $(\lambda;d)=1$.  The restriction to points with $(a;b;d)=1$ is required to make our notion of equivalence into a valid equivalence relation, (since $(\lambda;d)=1$ it has an inverse mod $d$).  We will call points with $(a;b;d)=1$ primitive modulo $d$.  The number of primitive points in each equivalence class which are distinct modulo $d$ is $\varphi(d)$.

Observe that the properties $f(a,b)\equiv 0\pmod{d_1}$ and $bf(a,b)\equiv0\pmod{d_2}$ are preserved by equivalence.  We may therefore define $\mathcal U(d_1,d_2)$ to be the set of equivalence classes of primitive points modulo $d=d_1d_2$ for which 
$f(a,b)\equiv 0\pmod{d_1}$
and 
$bf(a,b)\equiv 0\pmod{d_2}$.

For an equivalence class $x\pmod d$ we let $\lambda(x)$ be the lattice in $\Z^2$ generated by the points of $x$.  Thus $y\in \lambda(x)$ if and only if there exists some $(a,b)\in x$ and $\lambda\in \Z$ with 
$$y\equiv \lambda(a,b)\pmod d.$$
In particular the primitive points in $\lambda(x)$ are precisely those in $x$.  It follows that 
\begin{eqnarray*}
R^*(\cR,d_1,d_2)&=&\sum_{x\in\mathcal U(d_1,d_2)}\#\{(a,b)\in\cR\cap x:C(a,b)\}\\
&=&\sum_{x\in\mathcal U(d_1,d_2)}\#\{(a,b)\in \cR\cap\lambda(x):C(a,b),(a;b;d)=1\}\\
&=&\sum_{x\in\mathcal U(d_1,d_2)}\twosum{(a,b)\in \cR\cap\lambda(x)}{C(a,b)}\sum_{e|(a;b;d)}\mu(e)\\
&=&\sum_{x\in\mathcal U(d_1,d_2)}\sum_{e|d}\mu(e)\#\{(a,b)\in \cR\cap \lambda(x):C(a,b),e|(a,b)\}\\
&=&\sum_{x\in\mathcal U(d_1,d_2)}\sum_{e|d}\mu(e)\#\{(a,b)\in\cR\cap \lambda(x,e):C(a,b)\}\\
\end{eqnarray*}
where $\lambda(x,e)$ is the sublattice of $\lambda(x)$ consisting of points divisible by $e$.  

We have $(d;\Delta)=1$ so $(e;\Delta)=1$.  It follows that the sublattice of $\lambda(x,e)$ consisting of those points which are divisible by $\Delta$ is precisely $\lambda(x,e\Delta)$. It is then clear that the set 
$$\{(a,b)\in \lambda(x,e):C(a,b)\}$$ 
is a coset of the lattice $\lambda(x,e\Delta)$.

\begin{lem}
We have 
$$|\det \lambda(x,e\Delta)|=de\Delta^2.$$
\end{lem}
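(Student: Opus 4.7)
The plan is to compute $|\det \lambda(x,e\Delta)|$ as the index of this sublattice in $\Z^2$, which factors as
$$[\Z^2:\lambda(x,e\Delta)] = [\Z^2:\lambda(x)]\cdot[\lambda(x):\lambda(x,e\Delta)].$$
I would first handle the outer index. For any $(a,b) \in x$, the lattice $\lambda(x)$ is generated by $(a,b), (d,0), (0,d)$. Primitivity of $(a,b)$ modulo $d$ means $(a;b;d)=1$, and a short argument (e.g.\ via Smith normal form, or by noting that the image of $\lambda(x)$ in $(\Z/d\Z)^2$ is the cyclic subgroup generated by $(a,b) \bmod d$, of order $d$) gives $[\Z^2:\lambda(x)] = d$.

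Next I would compute the inner index $[\lambda(x):\lambda(x,e\Delta)]$ as the size of the image of $\lambda(x)$ in $(\Z/e\Delta\Z)^2$, since $\lambda(x,e\Delta) = \lambda(x)\cap (e\Delta)\Z^2$ is exactly the kernel of reduction mod $e\Delta$. Because $(e;\Delta)=1$ (which follows from $e \mid d$ and $(d;\Delta)=1$), the Chinese Remainder Theorem splits this image as a product of its images in $(\Z/e\Z)^2$ and $(\Z/\Delta\Z)^2$, so I can treat the two factors independently.

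Modulo $e$: since $e\mid d$, both generators $(d,0)$ and $(0,d)$ vanish, so the image is the cyclic subgroup of $(\Z/e\Z)^2$ generated by $(a,b) \bmod e$. The primitivity condition $(a;b;d)=1$ and $e\mid d$ force $(a;b;e)=1$, so this cyclic subgroup has order $e$. Modulo $\Delta$: since $(d;\Delta)=1$, the vectors $(d,0)$ and $(0,d)$ already generate all of $(\Z/\Delta\Z)^2$, which has order $\Delta^2$. Multiplying gives $[\lambda(x):\lambda(x,e\Delta)] = e\Delta^2$, and combining with $[\Z^2:\lambda(x)]=d$ yields $|\det\lambda(x,e\Delta)| = de\Delta^2$.

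The only real subtlety is verifying that $(a,b) \bmod e$ is primitive in $(\Z/e\Z)^2$; this is a direct consequence of the coprimality $(a;b;d)=1$ together with $e\mid d$, and once it is in hand the CRT decomposition makes the index computation essentially mechanical. No other obstacle arises, as the hypothesis $(d;\Delta)=1$ cleanly separates the contributions of the two factors.
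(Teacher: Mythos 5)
Your proof is correct and follows essentially the same route as the paper: both identify $|\det\lambda(x,e\Delta)|$ with its index in $\Z^2$ and compute that index by counting residue classes, using primitivity of $(a,b)$ to get the cyclic group of order $d$ (resp.\ $e$) and the coprimality $(d;\Delta)=1$ to split off the mod-$\Delta$ contribution via CRT. The only cosmetic difference is that you factor the index through the intermediate lattice $\lambda(x)$, whereas the paper counts the admissible classes modulo $d\Delta$ in one step via the formula $n^2/c$.
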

\begin{proof}
In general, if a lattice in $\Z^2$ is formed from all points whose reduction mod $n$ is in a set of $c$ equivalence classes then its determinant is $\frac{n^2}{c}$.

For our specific problem we take $n=d\Delta$.  Let $(a,b)$ be a fixed point of $x$. Since $(a;b;d)=1$ we know that the number of points modulo $d$ which are multiples of $(a,b)$ and divisible by $e$ is $\frac{d}{e}$.  Since $(d;\Delta)=1$ it follows by the Chinese remainder theorem that $c=\frac{d}{e}$ and therefore 
$$|\det\lambda(x,e\Delta)|=\frac{d^2\Delta^2}{d/e}=de\Delta^2.$$
\end{proof}

Let $R_1(x,e\Delta)$ denote the length of the shortest nonzero vector in $\lambda(x,e\Delta)$.  It is clear that this is bounded below by $R_1(x)$, the length of the shortest nonzero vector in $\lambda(x)$.  Let $V(\cR)$ and $P(\cR)$ denote the volume and perimeter of $\cR$, respectively.  By the standard method for counting lattice points we get 
$$R^*(\cR,d_1,d_2)=\sum_{x\in\mathcal U(d_1,d_2)}\sum_{e|d}\mu(e)
\left(\frac{V(\cR)}{de\Delta^2}+O(1+\frac{P(\cR)}{R_1(x)})\right).$$
Let $\rho^*(d_1,d_2)$ denote the number of primitive solutions modulo $d$ to 
$f(a,b)\equiv 0\pmod{d_1}$
and
$bf(a,b)\equiv 0\pmod{d_2}$.
Since the number of distinct points modulo $d$ in each equivalence class is $\varphi(d)$ we have 
$$\sum_{x\in\mathcal U(d_1,d_2)}\sum_{e|d}\frac{\mu(e)}{e}=\sum_{x\in\mathcal U(d_1,d_2)}\frac{\varphi(d)}{d}=\frac{\rho^*(d_1,d_2)}{d}.$$
We conclude that for any $\epsilon>0$ we have 
$$R^*(\cR,d_1,d_2)=\frac{\rho^*(d_1,d_2)V(\cR)}{d^2\Delta^2}+O_\epsilon(d^\epsilon(1+P(\cR)\sum_{x\in\mathcal U(d_1,d_2)}R_1(x)^{-1})).$$
Averaging this over $d_1$ and $d_2$ we get 
\begin{eqnarray*}
\lefteqn{\twosum{d_1\leq D_1,d_2\leq D_2}{(d_1;d_2)=(d_1d_2;\Delta)=1}\max_{P(\cR)\leq M}|R^*(\cR,d_1,d_2)-\frac{\rho^*(d_1,d_2)V(\cR)}{d_1^2d_2^2\Delta^2}|}\hspace{2cm}\\
&\ll_\epsilon&(D_1D_2)^\epsilon(D_1D_2+M\twosum{d_1\leq D_1,d_2\leq D_2}{(d_1;d_2)=(d_1d_2;\Delta)=1}\sum_{x\in\mathcal U(d_1,d_2)}R_1(x)^{-1}).\\
\end{eqnarray*}
Let $v_1(x)$ denote the shortest nonzero vector in $\lambda(x)$.  We know that 
$$\|v_1(x)\|^2\ll |\det \lambda(x)|\ll D_1D_2.$$
We may thus write the final sum as 
$$\sum_{0<a^2+b^2\ll D_1D_2}\frac{1}{\sqrt{a^2+b^2}}\#\{d_1,d_2,x:(d_1;d_2)=(d_1d_2;\Delta)=1,v_1(x)=(a,b)\}.$$
If $v_1(x)=(a,b)$ then 
$$d_1d_2|bf(a,b).$$
We first consider the contribution to the above sum from pairs $(a,b)$ with $b\ne 0$. Since $f$ is irreducible we have $bf(a,b)\ne 0$. It follows that the number of $d_1,d_2$ for a given $(a,b)$ is bounded by 
$$\tau_3(bf(a,b))\ll_\epsilon (D_1D_2)^\epsilon.$$  
For each such $d_1,d_2$ the number of $x\in \mathcal U(d_1,d_2)$ for which $v_1(x)=(a,b)$ is at most 
$$\#\mathcal U(d_1,d_2)=\frac{\rho^*(d_1,d_2)}{\varphi(d_1d_2)}\ll_\epsilon (D_1D_2)^\epsilon.$$
We conclude that 
\begin{eqnarray*}
\lefteqn{\twosum{0<a^2+b^2\ll D_1D_2}{b\ne 0}\frac{1}{\sqrt{a^2+b^2}}\,\#\{d_1,d_2,x:(d_1;d_2)=(d_1d_2;\Delta)=1,v_1(x)=(a,b)\}}\\
&\ll_\epsilon& (D_1D_2)^\epsilon\sum_{0<a^2+b^2\ll D_1D_2}\frac{1}{\sqrt{a^2+b^2}}\hspace{8cm}\\
&\ll_\epsilon&(D_1D_2)^{\frac12+\epsilon}.
\end{eqnarray*}
It remains to estimate the contribution from pairs $(a,0)$:
$$\sum_{0<a\ll \sqrt{D_1D_2}}\frac{1}{a}\,\#\{d_1,d_2,x:(d_1;d_2)=(d_1d_2;\Delta)=1,v_1(x)=(a,0)\}.$$
Suppose that $v_1(x)=(a,0)$.  We then have 
$$f(a,0)\equiv 0\pmod{d_1}.$$
Since $f(a,0)\ne 0$ it follows that the number of possible $d_1$ is bounded by 
$$\tau(f(a,0))\ll D_1^\epsilon.$$
For each such $d_1$ the number of $d_2$ is clearly bounded by $D_2$.  As above, the number of $x$ is then $O_\epsilon((D_1D_2)^\epsilon)$.  We conclude that 
\begin{eqnarray*}
\lefteqn{\sum_{0<a\ll \sqrt{D_1D_2}}\frac{1}{a}\,\#\{d_1,d_2,x:(d_1;d_2)=(d_1d_2;\Delta)=1,v_1(x)=(a,0)\}}\hspace{4cm}\\
&\ll_\epsilon &D_1^\epsilon D_2^{1+\epsilon}\sum_{0<a\ll \sqrt{D_1D_2}}\frac{1}{a}\ll_\epsilon D_1^\epsilon D_2^{1+\epsilon}.
\end{eqnarray*}
Combining these two cases we get 
$$\sum_{0<a^2+b^2\ll D_1D_2}\frac{1}{\sqrt{a^2+b^2}}\,\#\{d_1,d_2,x:(d_1;d_2)=(d_1d_2;\Delta)=1,v_1(x)=(a,b)\}$$
$$\ll_\epsilon (D_1D_2)^\epsilon((D_1D_2)^{\frac12}+D_2).$$
We have therefore proved the following.

\begin{lem}\label{Rstar}
For any $D_1,D_2>0$ and any $\epsilon>0$ we have 
$$\twosum{d_1\leq D_1,d_2\leq D_2}{(d_1;d_2)=(d_1d_2;\Delta)=1}\max_{P(\cR)\leq M}|R^*(\cR,d_1,d_2)-\frac{\rho^*(d_1,d_2)V(\cR)}{d_1^2d_2^2\Delta^2}|$$
$$\ll_\epsilon (D_1D_2)^\epsilon(D_1D_2+M((D_1D_2)^{\frac12}+D_2)).$$
\end{lem}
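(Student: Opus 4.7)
The plan is to combine the lattice-point counting estimates assembled above into a single inequality. First I would decompose $R^*(\cR,d_1,d_2)$ as a sum over equivalence classes $x \in \mathcal{U}(d_1,d_2)$ of the number of points of $\cR \cap \lambda(x)$ satisfying $C(a,b)$ and $(a;b;d)=1$, then strip the primitivity condition by M\"obius inversion on divisors $e \mid d$. Since $(d;\Delta)=1$, the congruence $C(a,b)$ cuts each sublattice $\lambda(x,e)$ down to a single coset of $\lambda(x,e\Delta)$, whose determinant is $de\Delta^2$. Applying the standard counting estimate $V(\cR)/|\det| + O(1 + P(\cR)/R_1)$ to each coset, and using the identity $\sum_e \mu(e)/e = \varphi(d)/d$ together with the relation $\varphi(d) \cdot \#\mathcal{U}(d_1,d_2) = \rho^*(d_1,d_2)$, I would collect the main terms as $\rho^*(d_1,d_2) V(\cR)/(d_1^2 d_2^2 \Delta^2)$.

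Next I would handle the two error contributions separately. The $O(1)$ term, once summed over $d_1,d_2,x$ and over $e \mid d$ (with the $d^\epsilon$ factor absorbing the number of divisors), contributes $(D_1 D_2)^{1+\epsilon}$. For the $O(P(\cR)/R_1(x))$ term, after bounding $R_1(x,e\Delta) \geq R_1(x)$, I would reorganise the triple sum over $(d_1,d_2,x)$ by the shortest nonzero vector: fix $v_1(x) = (a,b)$ with $0 < a^2+b^2 \ll D_1 D_2$ (by Minkowski applied to $\lambda(x)$), and then count the triples $(d_1,d_2,x)$ producing this vector. The decisive constraint is $d_1 d_2 \mid b f(a,b)$, which comes from $(a,b) \in \lambda(x)$ together with the divisibility conditions defining $\mathcal{U}(d_1,d_2)$.

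The main obstacle is the degeneracy at $b=0$. In the generic case $b \ne 0$, irreducibility of $f$ makes $bf(a,b)$ nonzero, so the divisor bound $\tau_3(bf(a,b)) \ll_\epsilon (D_1 D_2)^\epsilon$ controls the number of $(d_1,d_2)$, and $\#\mathcal{U}(d_1,d_2) \ll_\epsilon (D_1 D_2)^\epsilon$ bounds the number of classes per pair; summing $1/\sqrt{a^2+b^2}$ over a disk of radius $\sqrt{D_1 D_2}$ then contributes $(D_1 D_2)^{1/2+\epsilon}$. When $b=0$, however, the constraint collapses to $d_1 \mid f(a,0)$, leaving $d_2$ essentially unrestricted; only the trivial bound $d_2 \le D_2$ is available, and summing $1/a$ over $a \ll \sqrt{D_1 D_2}$ costs an additional factor $D_2^{1+\epsilon}$. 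Combining these two regimes accounts for the asymmetric error $M((D_1 D_2)^{1/2} + D_2)$ in the stated bound, and adding the $(D_1 D_2)^{1+\epsilon}$ contribution from the $O(1)$ term completes the proof.
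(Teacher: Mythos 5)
Your proposal follows the paper's own argument essentially step for step: the same decomposition into equivalence classes with M\"obius removal of the primitivity condition, the same determinant computation $de\Delta^2$ and lattice-point count, and the same reorganisation of the error by shortest vector with the $b\ne 0$ versus $b=0$ dichotomy producing the $(D_1D_2)^{1/2}$ and $D_2$ terms respectively. No gaps; this is the intended proof.
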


We next remove the restriction to primitive points.  As in Daniel's work, \cite[Lemma 3.3]{daniel}, we need the multiplicative functions $\psi_k$ which map the prime power $p^{\alpha k+\beta}$, for $1\leq \beta\leq k$, to $p^{\alpha+1}$.  

Let
\begin{eqnarray*}
R(\cR,d_1,d_2)&=&\#\{(a,b)\in\cR:C(a,b),d_1|f(a,b),d_2|bf(a,b)\}\\
&=&\twosum{e_1|\psi_3(d_1)}{e_2|\psi_4(d_2)}N(d_1,d_2,e_1,e_2)
\end{eqnarray*}
where
\begin{eqnarray*}
\lefteqn{N(d_1,d_2,e_1,e_2)}\\
&=&\#\{(a,b)\in\cR:C(a,b),(a;b;\psi_3(d_1)\psi_4(d_2))=e_1e_2,
d_1|f(a,b),d_2|bf(a,b)\}\\ 
&=&\#\{(a,b)\in\cR/e_1e_2:C(e_1e_2(a,b)),(a;b;\frac{\psi_3(d_1)\psi_4(d_2)}{e_1e_2})=1,\\
&&\hspace{3cm}\frac{d_1}{(d_1;e_1^3)}|f(a,b),\frac{d_2}{(d_2;e_2^4)}|bf(a,b)\}\\ 
&=&\#\{(a,b)\in\cR/e_1e_2:C(e_1e_2(a,b)),(a;b;\frac{d_1d_2}{(d_1;e_1^3)(d_2;e_2^4)})=1,\\
&&\hspace{3cm}\frac{d_1}{(d_1;e_1^3)}|f(a,b),\frac{d_2}{(d_2;e_2^4)}|bf(a,b)\}\\ 
&=&R^*(\cR/e_1e_,\frac{d_1}{(d_1;e_1^3)},\frac{d_2}{(d_2;e_2^4)};e_1,e_2).
\end{eqnarray*}
Thus
$$R(\cR,d_1,d_2)=\twosum{e_1|\psi_3(d_1)}{e_2|\psi_4(d_2)}R^*(\cR/e_1e_,\frac{d_1}{(d_1;e_1^3)},\frac{d_2}{(d_2;e_2^4)};e_1,e_2).$$

Here the addition of $(;e_1,e_2)$ to $R^*$ denotes that the congruences $C(a,b)$ are replaced by $C(e_1e_2(a,b))$.  Since $(e_1e_2;\Delta)=1$ these congruences are
$$a\equiv \overline{e_1e_2}a_0\pmod \Delta$$
and 
$$b\equiv \overline{e_1e_2}b_0\pmod \Delta.$$
The precise choice of coset has no effect on the above analysis of $R^*$ so Lemma \ref{Rstar} still holds when different congruence classes are taken for each pair $d_1,d_2$ in the sum.

Let $\rho(d_1,d_2)$ be the number of solutions modulo $d_1d_2$ to 
$$f(a,b)\equiv 0\pmod{d_1},\quad bf(a,b)\equiv 0\pmod{d_2}.$$
Applying the above analysis to the region $(0,d_1d_2]^2$ with no congruence $C$ gives the decomposition 
$$\rho(d_1,d_2)=\twosum{e_1|\psi_3(d_1)}{e_2|\psi_4(d_2)}(\frac{(d_1;e_1^3)}{e_1}\frac{(d_2;e_2^4)}{e_2})^2\rho^*(\frac{d_1}{(d_1;e_1^3)},\frac{d_2}{(d_2;e_2^4)}).$$
It follows that 
\begin{eqnarray*}
\lefteqn{R(\cR,d_1,d_2)-\frac{\rho(d_1,d_2)V(\cR)}{d_1^2d_2^2\Delta^2}}\\
&=&\twosum{e_1|\psi_3(d_1)}{e_2|\psi_4(d_2)}(R^*(\cR/e_1e_2,\frac{d_1}{(d_1;e_1^3)},\frac{d_2}{(d_2;e_2^4)})-\frac{V(\cR/e_1e_2)(d_1;e_1^3)^2(d_2;e_2^4)^2\rho^*(\frac{d_1}{(d_1;e_1^3)},\frac{d_2}{(d_2;e_2^4)})}{d_1^2d_2^2\Delta^2}.\\
\end{eqnarray*}
We are interested in the average of this over $d_1\leq D_1,d_2\leq D_2$ so we consider 
$$\twosum{e_1f_1\leq D_1,e_2f_2\leq D_2}{(e_1f_1;e_2f_2)=(e_1f_1e_2f_2;\Delta)=1}\delta(e_1,e_2,f_1,f_2)\max_{P(\cR)\leq M}|R^*(\cR/e_1e_2,f_1,f_2)-\frac{\rho^*(f_1,f_2)V(\cR/e_1e_2)}{f_1^2f_2^2}|,$$
where $\delta(e_1,e_2,f_1,f_2)$ is the number of pairs $d_1\leq D_1,d_2\leq D_2$ with 
$$e_1|\psi(d_1),\;e_2|\psi(d_2),\;f_1=\frac{d_1}{(d_1;e_1^3)},\; f_2=\frac{d_2}{(d_2;e_2^4)}.$$
It is clear that $\delta$ is the product of the number of suitable $d_1$ by the number of $d_2$.  These latter quantities were estimated by Daniel: they are bounded by divisor functions.  It follows that for any $\epsilon>0$ we have 
$$\delta(e_1,e_2,f_1,f_2)\ll_\epsilon (D_1D_2)^\epsilon.$$
Our sum is thus majorised by 
$$(D_1D_2)^\epsilon\twosum{e_1f_1\leq D_1,e_2f_2\leq D_2}{(e_1f_1;e_2f_2)=(e_1f_1e_2f_2;\Delta)=1}\max_{P(\cR)\leq M}|R^*(\cR/e_1e_2,f_1,f_2)-\frac{\rho^*(f_1,f_2)V(\cR/e_1e_2)}{f_1^2f_2^2}|.$$

For each pair $(e_1,e_2)$ in this sum we apply Lemma \ref{Rstar} to the sum over $f_1,f_2$.  This results in a bound 
$$(D_1D_2)^\epsilon\sum_{e_1\leq D_1,e_2\leq D_2}\left(\frac{D_1D_2}{e_1e_2}+\frac{M}{e_1e_2}\big((\frac{D_1D_2}{e_1e_2})^{\frac12}+\frac{D_2}{e_2}\big)\right).$$
We may therefore conclude with the following level of distribution result.

\begin{lem}
For any $D_1,D_2>0$ and any $\epsilon>0$ we have 
\begin{eqnarray*}
\lefteqn{\twosum{d_1\leq D_1,d_2\leq D_2}{(d_1;d_2)=(d_1d_2;\Delta)=1}\max_{P(\cR)\leq M}|R(\cR,d_1,d_2)-\frac{\rho(d_1,d_2)V(\cR)}{d_1^2d_2^2\Delta^2}|}\hspace{2cm}\\
&\ll_\epsilon &(D_1D_2)^\epsilon(D_1D_2+M((D_1D_2)^{\frac12}+D_2)).
\end{eqnarray*}
\end{lem}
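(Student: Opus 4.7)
The plan is to deduce this estimate from Lemma \ref{Rstar} by peeling off the non-primitive contribution, following the decomposition already set up with the multiplicative functions $\psi_3,\psi_4$. For each pair $(a,b)$ counted by $R(\cR,d_1,d_2)$ I would write $e_1e_2=(a;b;\psi_3(d_1)\psi_4(d_2))$ with $e_1\mid\psi_3(d_1)$ and $e_2\mid\psi_4(d_2)$, and rescale $(a,b)\to (a/e_1e_2,b/e_1e_2)$. After this rescaling the divisibility conditions $d_1\mid f(a,b)$, $d_2\mid bf(a,b)$ become conditions modulo $d_1/(d_1;e_1^3)$ and $d_2/(d_2;e_2^4)$ respectively, and the new pair is primitive modulo the reduced modulus. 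This yields the identity for $R$ already displayed in the excerpt, and the identical argument applied to a fundamental domain $(0,d_1d_2]^2$ (with no congruence $C$) produces the analogous expression for $\rho$.

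Subtracting these two decompositions, I would obtain a termwise error that is a sum over $e_1,e_2$ of differences $R^*(\cR/e_1e_2,\cdot,\cdot;e_1,e_2)-\rho^*V/(f_1^2f_2^2)$. Next I would change variables from $(d_1,d_2)$ to $(e_1,f_1,e_2,f_2)$, where $f_i=d_i/(d_i;e_i^{k_i})$, pulling the sums over $(d_1,d_2)$ inside those over $(e_1,e_2)$. The multiplicity $\delta(e_1,e_2,f_1,f_2)$ of pairs giving rise to a prescribed tuple factors as a product of two counts of the same shape considered by Daniel, each bounded by a divisor function, hence $\delta\ll_\epsilon (D_1D_2)^\epsilon$. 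The altered congruences $C(e_1e_2(a,b))$ merely translate the coset of $a_0,b_0\pmod\Delta$; since $(e_1e_2;\Delta)=1$ the inverse exists modulo $\Delta$, and the lattice-point count underlying Lemma \ref{Rstar} is indifferent to this shift.

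For each fixed $(e_1,e_2)$ I would then apply Lemma \ref{Rstar} to the inner sum over $f_1\le D_1/e_1$, $f_2\le D_2/e_2$, noting that $\cR/e_1e_2$ has perimeter at most $M/(e_1e_2)$. This produces a contribution
$$(D_1D_2)^\epsilon\left(\frac{D_1D_2}{e_1e_2}+\frac{M}{e_1e_2}\left(\Bigl(\frac{D_1D_2}{e_1e_2}\Bigr)^{1/2}+\frac{D_2}{e_2}\right)\right).$$
Summing over $e_1\le D_1$ and $e_2\le D_2$, the factors $1/e_1,1/e_2,1/\sqrt{e_1e_2}$ all contribute at worst logarithmic factors that are absorbed into $(D_1D_2)^\epsilon$, giving the claimed bound $(D_1D_2)^\epsilon(D_1D_2+M((D_1D_2)^{1/2}+D_2))$.

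The main potential obstacle is the bookkeeping for $\delta(e_1,e_2,f_1,f_2)$: one has to verify that, although $d_1/(d_1;e_1^3)$ is not a multiplicative function of $d_1$ in a simple way, the number of $d_1\le D_1$ with a prescribed value of this quantity and a prescribed divisibility $e_1\mid\psi_3(d_1)$ remains divisor-bounded. This is exactly the input that Daniel supplies in \cite[Lemma 3.3]{daniel}, and once that estimate is in hand the remainder of the argument is a routine assembly of the ingredients already displayed.
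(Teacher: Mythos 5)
Your proposal matches the paper's proof essentially step for step: the same $\psi_3,\psi_4$ decomposition into primitive counts $R^*$, the same parallel decomposition of $\rho$ over $(0,d_1d_2]^2$, the same change of variables with the multiplicity $\delta(e_1,e_2,f_1,f_2)$ bounded by divisor functions via Daniel, and the same application of Lemma \ref{Rstar} for each fixed $(e_1,e_2)$ followed by summation. The argument is correct as written.
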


We are interested in this result when $\cR=(0,N]^2$ for large $N$.

\begin{lem}\label{mainlod}
Let 
$$R(d_1,d_2)=R((0,N]^2,d_1,d_2).$$
  Suppose $\eta>0$ is fixed.  Then there exists $\delta>0$, depending on $\eta$, such that if 
$$0<D_1D_2\leq N^{2-\eta}$$
and 
$$0<D_2\leq N^{1-\eta}$$
we have 
$$\twosum{d_1\leq D_1,d_2\leq D_2}{(d_1;d_2)=(d_1d_2;\Delta)=1}|R(d_1,d_2)-\frac{\rho(d_1,d_2)N^2}{d_1^2d_2^2\Delta^2}|\ll_\eta N^{2-\delta}.$$
\end{lem}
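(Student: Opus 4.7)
The plan is to derive Lemma \ref{mainlod} as a direct specialization of the preceding level-of-distribution estimate. Taking $\cR=(0,N]^2$ gives $V(\cR)=N^2$ and $P(\cR)\ll N$, so the single quantity $R(d_1,d_2)$ is captured inside the maximum of the previous lemma once we fix $M$ of the order of $N$. Discarding the maximum (which can only be at least as large as the single instance we care about) and inserting $M\asymp N$ into the right-hand side yields
$$\twosum{d_1\le D_1,d_2\le D_2}{(d_1;d_2)=(d_1d_2;\Delta)=1}\Bigl|R(d_1,d_2)-\frac{\rho(d_1,d_2)N^2}{d_1^2d_2^2\Delta^2}\Bigr|\ll_\epsilon (D_1D_2)^\epsilon\bigl(D_1D_2+N(D_1D_2)^{1/2}+ND_2\bigr).$$

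It then remains to verify that under the hypotheses $D_1D_2\le N^{2-\eta}$ and $D_2\le N^{1-\eta}$, each of the three contributions on the right is $O(N^{2-\delta})$ for some $\delta=\delta(\eta)>0$, once $\epsilon$ is chosen small enough. Indeed, the first term is at most $N^{2-\eta}$ directly from the first hypothesis; the middle term is at most $N\cdot N^{(2-\eta)/2}=N^{2-\eta/2}$; and the last term is at most $N\cdot N^{1-\eta}=N^{2-\eta}$. Multiplying by $(D_1D_2)^\epsilon\le N^{\epsilon(2-\eta)}$ and taking, for instance, $\epsilon=\eta/(4(2-\eta))$ gives each contribution as $O(N^{2-\eta/4})$, so $\delta=\eta/4$ suffices.

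There is no substantive obstacle; the lemma is a routine specialization of the previous one. The only point worth flagging is that the binding constraint is the middle term $N(D_1D_2)^{1/2}$, which is precisely what forces the hypothesis $D_1D_2\le N^{2-\eta}$ rather than something larger. The separate hypothesis $D_2\le N^{1-\eta}$ is needed exactly to handle the last term $ND_2$, which in turn traces back to the $(a,0)$ contribution encountered when bounding $\sum R_1(x)^{-1}$ in the analysis of $R^*$. Thus the two hypotheses on $D_1,D_2$ correspond transparently to the two boundary-type error terms produced by the lattice-point argument.
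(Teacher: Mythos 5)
Your proposal is correct and is exactly the paper's argument: the lemma is obtained by specializing the preceding level-of-distribution result to $\cR=(0,N]^2$ with $V(\cR)=N^2$, $P(\cR)\ll N$, and choosing $\epsilon$ small in terms of $\eta$. Your explicit verification of the three error terms and the choice $\delta=\eta/4$ simply fills in details the paper leaves to the reader.
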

\begin{proof}
This follows on putting $V(\cR)=N^2$, $P(\cR)\ll N$ into the previous lemma and taking $\epsilon$ sufficiently small in terms of $\eta$. 
\end{proof}

If we let $\rho_1(d)$ be the number of solutions modulo $d$ to 
$f(a,b)\equiv 0\pmod d$
and $\rho_2(d)$ the number of solutions to 
$bf(a,b)\equiv 0\pmod d$
then if $(d_1;d_2)=1$ we have 
$$\rho(d_1,d_2)=\rho_1(d_1)\rho_2(d_2).$$

We also need to understand the quantity 
$$R_1(d_1,d_2)=\#\{(a,b)\in (0,N]^2:C(a,b),\,bf(a,b)\equiv 0\!\!\!\pmod{d_1},\,b\equiv 0\!\!\!\pmod {d_2}\}.$$
This is only required for small $d_1,d_2$ so the following is sufficient.

\begin{lem}\label{fliplod}
For any $d_1,d_2\in\N$ with $(d_1;d_2)=(d_1d_2;\Delta)=1$ and $d_1d_2\leq N$ we have, for any $\epsilon>0$ that  
$$R_1(d_1,d_2)=\frac{N^2\rho_2(d_1)}{d_1^2d_2\Delta^2}+O_\epsilon(Nd_1^\epsilon).$$
\end{lem}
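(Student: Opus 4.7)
The plan is a direct CRT decomposition plus elementary lattice-point counting in the square $(0,N]^2$, with a standard pointwise bound on $\rho_2$ to clean up the error.

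Since $(d_1;d_2)=(d_1d_2;\Delta)=1$, the conditions defining $R_1(d_1,d_2)$ split across the three pairwise-coprime moduli $d_1,d_2,\Delta$, so I would count valid residue pairs $(a,b)\pmod{d_1d_2\Delta}$ by CRT. Modulo $\Delta$, the congruence $C(a,b)$ picks out a single pair. Modulo $d_2$, the condition $b\equiv 0$ with $a$ unrestricted yields $d_2$ pairs. Modulo $d_1$, by the very definition of $\rho_2$, the condition $bf(a,b)\equiv 0\pmod{d_1}$ yields $\rho_2(d_1)$ pairs. Thus there are exactly $\rho_2(d_1)\,d_2$ admissible residue classes modulo $d_1d_2\Delta$.

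For each admissible class, the number of $(a,b)\in(0,N]^2$ in it is
$$\Bigl(\tfrac{N}{d_1d_2\Delta}+O(1)\Bigr)^2=\tfrac{N^2}{(d_1d_2\Delta)^2}+O\Bigl(\tfrac{N}{d_1d_2\Delta}+1\Bigr).$$
Summing over the $\rho_2(d_1)d_2$ admissible classes gives
$$R_1(d_1,d_2)=\frac{N^2\rho_2(d_1)}{d_1^2d_2\Delta^2}+O\!\left(\frac{N\rho_2(d_1)}{d_1\Delta}+\rho_2(d_1)d_2\right),$$
which is the main term we want plus two error contributions to be absorbed.

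To finish I would invoke the standard divisor-type bound $\rho_2(d_1)\ll_\epsilon d_1^{1+\epsilon}$, which follows from multiplicativity and the elementary estimate $\rho_2(p^k)\ll p^k$ at prime powers coprime to $\Delta$ (either $p\mid b$, which is the easy case, or $p\mid f(a,b)$ with $b$ a unit, so $a$ is a root of a cubic with unit leading coefficient in $\Z/p^k$). The first error then becomes $\ll Nd_1^{\epsilon}$ directly, while the second satisfies $\rho_2(d_1)d_2\ll d_1^{\epsilon}(d_1d_2)\le Nd_1^{\epsilon}$ using the hypothesis $d_1d_2\le N$. There is no real obstacle: the key technical judgement is to exploit CRT across all three moduli at once rather than handling the $a$-sum and $b$-sum in series, which avoids propagating an extra factor of $d_2$ through the lattice-point error.
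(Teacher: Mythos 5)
Your argument is the same as the paper's: by CRT across the pairwise coprime moduli $d_1,d_2,\Delta$ there are $\rho_2(d_1)d_2$ admissible residue classes modulo $d_1d_2\Delta$, each contributing $N^2/(d_1d_2\Delta)^2+O(1+N/(d_1d_2\Delta))$ points of $(0,N]^2$, and the error is then absorbed via a divisor-type bound on $\rho_2$. The paper performs exactly this computation and, like you, passes from $O(N\rho_2(d_1)/d_1)$ to $O_\epsilon(Nd_1^\epsilon)$ by invoking $\rho_2(d_1)\ll_\epsilon d_1^{1+\epsilon}$ (silently, in its case).

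The one step of yours that is actually false is the claimed prime-power bound $\rho_2(p^k)\ll p^k$, and in particular the remark that $p\mid b$ is ``the easy case''. It is the problematic case: for $p\notin\p_1$ the leading coefficient $c_3$ of $f(a,b)$ is a unit, so when $p\mid b$ we have $f(a,b)\equiv c_3a^3\pmod p$, whence $p\mid f(a,b)$ forces $p\mid a$; but then $v_p(f(a,b))\geq 3$ automatically. Consequently every pair with $v_p(b)=1$ and $p\mid a$ satisfies $p^4\mid bf(a,b)$, which already gives $\rho_2(p^3)\gg p^4$, so $\rho_2(d_1)\ll_\epsilon d_1^{1+\epsilon}$ fails once $d_1$ is divisible by a cube, and with it your treatment of both error terms. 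To be fair, the lemma as stated in the paper suffers from the same defect, and both proofs are rescued in the only case where the lemma is used (the bound for $S_2$, where the first argument of $R_1$ is a squarefree divisor of $P(z)$): for squarefree $d_1$ coprime to $\Delta$ one has $\rho_2(d_1)=\prod_{p\mid d_1}\bigl((p-1)\nu(p)+p\bigr)\leq 4^{\omega(d_1)}d_1\ll_\epsilon d_1^{1+\epsilon}$, since $\nu(p)\leq 3$ for $p\notin\p_1$. So your write-up matches the paper's argument, but you should either restrict the statement to squarefree $d_1$ or replace the incorrect prime-power estimate with a correct justification.
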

\begin{proof}
The number of points counted by $R_1$ congruent to a given solution modulo $d_1d_2\Delta$ is 
$$\frac{N^2}{d_1^2d_2^2\Delta^2}+O(1+\frac{N}{d_1d_2\Delta})=\frac{N^2}{d_1^2d_2^2\Delta^2}+O(\frac{N}{d_1d_2}).$$
By the Chinese remainder theorem the number of solutions modulo $d_1d_2\Delta$ is $d_2\rho_2(d_1)$. It follows that 
$$R(d_1,d_2)=\frac{N^2\rho_2(d_1)}{d_1^2d_2\Delta^2}+O(\frac{N\rho_2(d_1)}{d_1})=\frac{N^2\rho_2(d_1)}{d_1^2d_2\Delta^2}+O_\epsilon(Nd_1^\epsilon).$$
\end{proof}

\section{The Functions $\rho_1$ and $\rho_2$}

We need various estimates for sums and products involving the functions $\rho_1$ and $\rho_2$.  Let $\nu(d)$ be the number of solutions to the congruence 
$$f(x)\equiv 0\pmod d.$$
For all primes $p\notin\p_1$ we may write $\rho_1(p)$ and $\rho_2(p)$ in terms of $\nu(p)$:
$$\rho_1(p)=(p-1)\nu(p)+1$$
and 
$$\rho_2(p)=(p-1)\nu(p)+p.$$

In the following equations let $c$ denote a real constant which may depend  on $q$ and $f$ and which may differ from line to line.  It is well known that 
$$\sum_{p\leq x}\frac{1}{p}=\log\log x+c+o(1),$$
$$\twosum{p\leq x}{p\equiv a\pmod q}\frac{1}{p}=\frac{1}{q-1}\log\log x+c+o(1),$$
if $(a;q)=1$, and 
$$\sum_{p\leq x}\frac{1}{p^2}=c+o(1).$$

Let $L$ be the cubic field generated by the polynomial $f$ and let $\zeta_L$ be its Dedekind zeta function.  For all primes $p\notin\p_1$ we know that $\nu(p)$ is equal to the coefficient of $p^{-s}$ in $\zeta_L(s)$.  It follows from the Prime Ideal Theorem that 
$$\sum_{p\leq x}\frac{\nu(p)}{p}=\log\log x+c+o(1).$$
Finally we would like to show that, for $(a;q)=1$, we have 
\begin{equation}\label{nuprog}
\twosum{p\leq x}{p\equiv a\pmod q}\frac{\nu(p)}{p}\sim \frac{1}{q-1}\log\log x.
\end{equation}
Unfortunately this is not always true.  For example, suppose we have 
$$f(t)=t^3-7t^2+14t-7.$$
The field $L$ is then abelian of degree $3$ and contained in the cyclotomic field $\Q(\zeta_7)$.  It is easy to deduce from this that 
$$\nu(p)=\begin{cases}
3 & p\equiv \pm 1\pmod 7\\
0 & p\equiv \pm 2,\pm 3\pmod 7.\\
\end{cases}$$
The formula (\ref{nuprog}) is therefore not true for this $f$ when $q=7$.  It follows that many of the details of the sieve would be different in this case.  In order to avoid these difficulties we restrict our attention to those polynomials $f$ and primes $q$ for which (\ref{nuprog}) holds.  We will show that (\ref{nuprog}) follows from our hypothesis that the number field $L$ is not contained in $\Q(\zeta_q)$.

Expanding using characters we are interested in 
$$\frac{1}{q-1}\sum_{\chi\pmod q}\overline{\chi(a)}\sum_{p\leq x}\frac{\chi(p)\nu(p)}{p}.$$
For $p\notin\p_1$ the quantity $\chi(p)\nu(p)$ is the coefficient of $p^{-s}$ in the function $\zeta_L(s,\chi)$. This is the  Hecke $L$-function coming from the character which maps an ideal $I$ to $\chi(N(I))$.  

\begin{lem}\label{hecke}
If $\chi\ne \chi_0$ is a character modulo $q$ then, under our assumption that $L\not\subseteq \Q(\zeta_q)$, $\zeta_L(s,\chi)$ is regular  at $s=1$.
\end{lem}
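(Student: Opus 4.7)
The plan is to realize $\zeta_L(s,\chi)$ as the Hecke L-function of the ideal character $\psi_\chi\colon I\mapsto\chi(N_{L/\Q}(I))$ on $L$, and then to show $\psi_\chi$ is nontrivial under the given hypotheses. A nontrivial Hecke character has an entire L-function, which yields regularity at $s=1$.

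Let $F\subseteq\Q(\zeta_q)$ be the subfield fixed by $\ker\chi$, when $\chi$ is regarded as a character of $\mathrm{Gal}(\Q(\zeta_q)/\Q)$; since $\chi\ne\chi_0$, we have $F\ne\Q$. By the compatibility of Artin reciprocity with the norm map, the Hecke character $\psi_\chi$ on ideals of $L$ corresponds to the restriction of $\chi$, viewed as a one-dimensional Galois character $\rho_\chi$ of $\mathrm{Gal}(\overline{\Q}/\Q)$, to the subgroup $\mathrm{Gal}(\overline{\Q}/L)$. Consequently $\psi_\chi$ is trivial precisely when $\mathrm{Gal}(\overline{\Q}/L)\subseteq\ker\rho_\chi=\mathrm{Gal}(\overline{\Q}/F)$, which is to say when $F\subseteq L$.

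It then remains to rule out $F\subseteq L$. Since $F\subseteq L$ would force $[F:\Q]$ to divide $[L:\Q]=3$, we would have $[F:\Q]\in\{1,3\}$. The case $[F:\Q]=1$ gives $\chi=\chi_0$, contrary to hypothesis; the case $[F:\Q]=3$ forces $F=L$ (equal dimensions over $\Q$), hence $L\subseteq\Q(\zeta_q)$, again contrary to hypothesis. Therefore $\psi_\chi$ is nontrivial, so its Hecke L-function is entire, and in particular regular at $s=1$.

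The main technical step is the identification of $\zeta_L(s,\chi)$ with the Hecke L-function of $\psi_\chi$: one must check that the Dirichlet-series coefficients agree (at $p\notin\p_1$ both give $\chi(p)\nu(p)$, since $\nu(p)$ counts the prime ideals of $L$ of norm $p$) and that the correspondence of $\psi_\chi$ with $\rho_\chi|_{\mathrm{Gal}(\overline{\Q}/L)}$ via class field theory is the one that permits the application of Hecke's analytic continuation. Once this standard identification is in place, the finishing degree argument exploiting $[L:\Q]=3$ is completely elementary.
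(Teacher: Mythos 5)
Your proof is correct, but it takes a genuinely different route from the paper's. You use the norm-compatibility of the Artin reciprocity map to identify the Hecke character $I\mapsto\chi(N(I))$ with the restriction to $\mathrm{Gal}(\overline{\Q}/L)$ of the Galois character attached to $\chi$, so that "induced from the trivial character" becomes equivalent to the field containment $F\subseteq L$, where $F$ is the fixed field of $\ker\chi$ in $\Q(\zeta_q)$; the tower formula $[F:\Q]\mid[L:\Q]=3$ then leaves only $F=\Q$ (excluded by $\chi\ne\chi_0$) or $F=L\subseteq\Q(\zeta_q)$ (excluded by hypothesis). The paper instead argues by contradiction at the level of prime ideals, assuming $\chi(N(P))=1$ for almost all $P$ and splitting into two cases: for $L/\Q$ non-Galois it uses the primes with $(\frac{\delta}{p})=-1$, which lie under primes of norms $p$ and $p^2$ and hence would all fall in $\ker\chi$, contradicting Dirichlet's theorem; for $L/\Q$ Galois it uses Chebotarev to show the class field $L_H$ attached to $H=\ker\chi$ would satisfy $L_H\subseteq L$, forcing $L=L_H\subseteq\Q(\zeta_q)$. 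Your argument is shorter and uniform in the two cases, at the cost of invoking the functoriality of class field theory under norms rather than only density statements; both inputs are standard and of comparable depth. One point worth making explicit in your write-up: "trivial" for $\psi_\chi$ must mean trivial on all ideals coprime to the modulus (i.e.\ induced from the trivial primitive character), since that is both the condition governing the pole at $s=1$ and the condition equivalent to $\rho_\chi|_{\mathrm{Gal}(\overline{\Q}/L)}$ being trivial; as stated your equivalence is consistent with this reading.
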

\begin{proof}
We say that a property holds for almost all primes if it holds for all primes with finitely many exceptions. It is  enough to show that the Hecke character $I\mapsto \chi(N(I))$ is not induced from the trivial character, as it is the only primitive Hecke character whose $L$-function has a singularity at $s=1$.  In other words we need to show that there are infinitely many prime ideals $P$ for which $\chi(N(P))\ne 1$.  We suppose that this is false so that, in particular,  almost all primes $p$,  for which there is an ideal of norm $p$,  are in a proper subgroup $H$ of $(\Z/q\Z)^*$.  

We first consider the case that $L/\Q$ is not Galois,   so its discriminant, $\delta$, is not a square.  It can be shown that if a prime $p$ satisfies 
$(\frac{\delta}{p})=-1$
then it factorises in $L$ into prime ideals of norms $p$ and $p^2$.  It follows that the reduction modulo $q$ of almost all such primes must be in $H$.  However, we can show using Dirichlet's theorem on primes in arithmetic progressions that for any prime $q$ and any nonsquare integer $\delta$ the reductions modulo $q$ of almost all the primes $p$ for which $(\frac{\delta}{p})=-1$ generate the whole of $(\Z/q\Z)^*$.  This is a contradiction so $\chi$ cannot be induced from the trivial character and $\zeta_L(s,\chi)$ is regular at $1$.

Next we consider the case that $L/\Q$ is Galois.  We know, by assumption, that the primes which split completely in $L$ are contained in $H$.  

Suppose in general that  we have Galois number fields $L_1,L_2$ and almost all the primes which split completely in $L_1$ also split completely in $L_2$.  It follows that almost all the primes which split in $L_1$ also split in the composite extension $L_1L_2$.  By Chebotarev's Density Theorem the density of primes which split in $L_1$ is $\frac{1}{[L_1:\Q]}$ whereas the density of those splitting in $L_1L_2$ is $\frac{1}{[L_1L_2:\Q]}$.  We conclude that 
$$\frac{1}{[L_1:\Q]}\leq \frac{1}{[L_1L_2:\Q]}$$
so that $L_1L_2=L_1$ and therefore $L_2$ is a subfield of $L_1$.

By class field theory we can construct a number field $L_H$ whose only ramified prime is $q$ and for which the primes which split completely are those in $H$, ($L_H$ is the class field coming from the modulus $(q)$ and the subgroup $H$).  It follows by the previous paragraph that $L_H$ is contained in $L$.  However, $[L:\Q]=3$ so we must have $L=L_h$.  By class field theory, $L_H\subseteq \Q(\zeta_q)$, which contradicts our assumption on $L$.  We deduce that the Hecke character is not induced from the trivial one and thus its $L$-function has no singularities.  
\end{proof}

It now follows by general theory that for $\chi\ne\chi_0$ we have, as $x\rightarrow \infty$,  
$$\sum_{p\leq x}\frac{\chi(p)\nu(p)}{p}=c+o(1),$$
for some constant $c$ depending on $f,q$ and $\chi$.  We can therefore conclude that 
$$\twosum{p\leq x}{p\equiv a\pmod q}\frac{\nu(p)}{p}=\frac{1}{q-1}\log\log x+c+o(1),$$
with $c$ depending on $f,q$ and $a$.

Let
$$\p=\{p:p\notin \p_1,p\equiv 1\pmod q\}$$
and
$$\p'=\{p:p\notin \p_1,p\not\equiv 1\pmod q\}.$$

\begin{lem}\label{rhoprod}
As $x\rightarrow \infty$ we have 
$$\twoprod{p\leq x}{p\in\p}(1-\frac{\rho_2(p)}{p^2})\sim \frac{c_2(f,q)}{(\log x)^{\frac{2}{q-1}}},$$
and
$$\twoprod{p\leq x}{p\in\p'}(1-\frac{\rho_1(p)}{p^2})\sim \frac{c_1(f,q)}{(\log x)^{\frac{q-2}{q-1}}}$$
where $c_1(f,q),c_2(f,q)>0$.
\end{lem}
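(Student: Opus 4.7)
The plan is to take logarithms of both products and reduce to sums of $\rho_i(p)/p^2$ over the relevant progressions, which themselves reduce to the previously established asymptotic formulas for $\sum 1/p$ and $\sum \nu(p)/p$ in arithmetic progressions modulo $q$.

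For the first product, since $\nu(p) \le 3$ and hence $\rho_2(p)/p^2 \ll 1/p$, the standard expansion $\log(1-y) = -y + O(y^2)$ combined with convergence of $\sum_p 1/p^2$ gives
$$\log \twoprod{p\le x}{p\in\p}\Bigl(1-\frac{\rho_2(p)}{p^2}\Bigr) = -\twosum{p\le x}{p\in\p}\frac{\rho_2(p)}{p^2} + O(1).$$
Substituting the identity $\rho_2(p) = (p-1)\nu(p) + p$, and absorbing the convergent tail $\sum \nu(p)/p^2$ into $O(1)$, this main sum becomes
$$\twosum{p\le x}{p\in\p}\frac{\nu(p)}{p} + \twosum{p\le x}{p\in\p}\frac{1}{p} + O(1).$$
Since $\p$ consists of primes $\equiv 1\pmod q$ with finitely many exceptions, both the asymptotic for $\sum_{p\equiv 1(q)} 1/p$ and the analogue (\ref{nuprog}) for $\sum_{p\equiv 1(q)} \nu(p)/p$ apply, each contributing $\frac{1}{q-1}\log\log x + c + o(1)$. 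Adding these and exponentiating yields the claimed asymptotic with $c_2(f,q) = e^{-c}>0$.

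For the second product the argument is parallel. Using $\rho_1(p)=(p-1)\nu(p)+1$ and $\rho_1(p)/p^2 \ll 1/p$, we obtain
$$\log\twoprod{p\le x}{p\in\p'}\Bigl(1-\frac{\rho_1(p)}{p^2}\Bigr) = -\twosum{p\le x}{p\in\p'}\frac{\nu(p)}{p} + O(1),$$
where the $1/p^2$ contribution and the $\nu(p)/p^2$ term are absorbed into $O(1)$. The remaining sum is evaluated by splitting
$$\twosum{p\le x}{p\in\p'}\frac{\nu(p)}{p} = \sum_{p\le x}\frac{\nu(p)}{p} - \twosum{p\le x}{p\equiv 1(q)}\frac{\nu(p)}{p} + O(1),$$
which by the Prime Ideal Theorem estimate $\sum_{p\le x}\nu(p)/p = \log\log x + c + o(1)$ and (\ref{nuprog}) equals $\frac{q-2}{q-1}\log\log x + c + o(1)$. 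Exponentiation gives the stated asymptotic with $c_1(f,q) = e^{-c}>0$.

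The substance of the lemma has essentially already been done: the only real content is organizing these ingredients carefully, ensuring the error terms truly are bounded (which follows from $\nu(p)\le 3$ and convergence of $\sum 1/p^2$) and confirming that the resulting constants are strictly positive (which is automatic since they are exponentials of finite real numbers). There is no genuine obstacle in this lemma; all the delicate analytic work was carried out in Lemma \ref{hecke} to justify (\ref{nuprog}).
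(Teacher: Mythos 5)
Your proof is correct and follows essentially the same route as the paper: take logarithms, expand $\log(1-y)=-y+O(y^2)$, reduce to $\sum \nu(p)/p$ and $\sum 1/p$ over the relevant residue classes modulo $q$ via (\ref{nuprog}), and exponentiate. The only point to be careful about is that the intermediate error terms you write as $O(1)$ must in fact be $c+o(1)$ (convergent, not merely bounded) for the asymptotic equivalence to follow, but this is guaranteed by the convergence of $\sum_p 1/p^2$ and $\sum_p \nu(p)/p^2$, which you correctly invoke.
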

\begin{proof}
Let 
$$P=\twoprod{p\leq x}{p\in\p}(1-\frac{\rho_2(p)}{p^2}).$$
Since all the primes dividing $f$ are in $\p_1$ and hence not in $\p$ we know that $\rho_2(p)<p^2$ for all $p\in\p$.  It follows that the terms in $P$ are all positive so we may take logs:
\begin{eqnarray*}
\log P&=&\twosum{p\leq x}{p\in\p}\log(1-\frac{\rho_2(p)}{p^2})\\
&=&\twosum{p\leq x}{p\in\p}(-\frac{\rho_2(p)}{p^2}+O(\frac{1}{p^2}))\\
&=&\twosum{p\leq x}{p\in\p}(-\frac{\nu(p)+1}{p}+O(\frac{1}{p^2}))\\
&=&-\frac{2}{q-1}\log\log x+c+o(1).\\
\end{eqnarray*}
The first result follows on taking $c_2(f,q)=e^c$ with the $c$ from the last line.  The second result can be proved analogously.
\end{proof}

It is clear that if $p\notin\p_1$ then $\nu(p)\leq 3$ so that $\rho_1(p)\leq 3p$.  We also need a bound for $\rho_1$ at prime powers.

\begin{lem}\label{rho1bound}
For any prime $p\notin\p_1$ and any $\alpha\in\N$ we have 
$$\rho_1(p^\alpha)\ll p^{\frac{4\alpha}{3}},$$
the implied constant being absolute.
\end{lem}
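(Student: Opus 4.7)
The plan is to bound $\rho_1(p^\alpha)$ by partitioning the solutions $(a,b)\in(\Z/p^\alpha\Z)^2$ of $f(a,b)\equiv 0\pmod{p^\alpha}$ according to the $p$-divisibility of $a$ and $b$, and then iterating a recursion in $\alpha$.

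If $p\nmid b$, then $b$ is invertible modulo $p^\alpha$ and the substitution $t=ab^{-1}$ converts $f(a,b)=b^3f(t)\equiv 0$ into $f(t)\equiv 0\pmod{p^\alpha}$. Since $p\notin\p_1$ does not divide $\mathrm{disc}(f)$, Hensel's lemma gives $\nu(p^\alpha)=\nu(p)\le 3$, so this case contributes at most $3\,\varphi(p^\alpha)\le 3p^\alpha$. If $p\mid b$ and $p\nmid a$, then reducing $f(a,b)\equiv 0$ modulo $p$ yields $c_3a^3\equiv 0\pmod p$, where $c_3$ is the coefficient of $a^3$ in the homogeneous form $f(a,b)$; since $p\nmid c_3$ by the definition of $\p_1$, this forces $p\mid a$, a contradiction, and so this case is empty. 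If both $p\mid a$ and $p\mid b$, writing $(a,b)=(pa',pb')$ with $(a',b')\in(\Z/p^{\alpha-1}\Z)^2$ gives $f(a,b)=p^3f(a',b')$, so the congruence becomes $p^{\max(\alpha-3,0)}\mid f(a',b')$. For $\alpha\le 3$ this is vacuous and the case contributes $p^{2(\alpha-1)}$; for $\alpha\ge 4$ lifting solutions modulo $p^{\alpha-3}$ up to modulus $p^{\alpha-1}$ gives a contribution of exactly $p^4\,\rho_1(p^{\alpha-3})$.

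Combining the three cases yields the recursion
$$\rho_1(p^\alpha)\le 3p^\alpha+p^4\,\rho_1(p^{\alpha-3})\quad(\alpha\ge 4),$$
together with the base-case bounds $\rho_1(p^\beta)\ll p^{4\beta/3}$ for $\beta\in\{1,2,3\}$, all of which follow directly from the analysis above with an absolute constant. Iterating the recursion $k$ times produces
$$\rho_1(p^\alpha)\le 3p^\alpha\sum_{j=0}^{k-1}p^j+p^{4k}\rho_1(p^{\alpha-3k}),$$
and choosing $k=\lfloor(\alpha-1)/3\rfloor$ so that $\alpha-3k\in\{1,2,3\}$, the inequality $\alpha+k\le 4\alpha/3$ valid for $k\le\alpha/3$ shows that both pieces are $O(p^{4\alpha/3})$ with absolute constants. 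This gives the claim.

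The argument is essentially book-keeping once the case decomposition is in place; the one point that genuinely needs attention is the emptiness of the second case, and this relies on the leading coefficient of $f(a,b)$ being coprime to $p$ — a property built into the choice of $\p_1$.
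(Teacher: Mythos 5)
Your proof is correct. The paper itself disposes of this lemma in one line by citing Daniel's identity \cite[(7.4)]{daniel} together with his bound \cite[(3.2)]{daniel}, which combine to give exactly
$$\rho_1(p^\alpha)\leq 3\sum_{0\leq \beta<\lceil\alpha/3\rceil}p^{\alpha+\beta}+p^{2(\alpha-\lceil\alpha/3\rceil)}\ll p^{\frac{4\alpha}{3}}.$$
What you have done is re-derive that identity from scratch, and your three-case decomposition ($p\nmid b$; $p\mid b$, $p\nmid a$; $p\mid a$ and $p\mid b$) is precisely the $p$-adic descent underlying Daniel's recursion: the unit-$b$ case gives the terms $3p^{\alpha+\beta}$ via Hensel (using $p\nmid\mathrm{disc}(f)$ and $p$ coprime to the leading coefficient, both guaranteed by the definition of $\p_1$), the mixed case is empty for the reason you give, and the doubly divisible case produces the factor $p^4\rho_1(p^{\alpha-3})$, whose trivial termination at depth $\lceil\alpha/3\rceil$ yields the final term $p^{2(\alpha-\lceil\alpha/3\rceil)}$. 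All the bookkeeping checks out: the geometric sum is $\ll p^{k}$ with $k\leq\alpha/3$, the base cases satisfy $\rho_1(p^\beta)\leq 4p^{4\beta/3}$ for $\beta\in\{1,2,3\}$, and every constant is absolute. The only difference of substance is that your version is self-contained, which is arguably preferable to the paper's citation, at the cost of half a page.
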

\begin{proof}
We substitute Daniel's bound \cite[(3.2)]{daniel}, which holds for all $p\notin\p_1$, into his identity \cite[(7.4)]{daniel}.  This results in 
\begin{eqnarray*}
\rho_1(p^\alpha)&\leq& 3\sum_{0\leq \beta<\lceil\alpha/3\rceil}p^{\alpha+\beta}+p^{2(\alpha-\lceil\alpha/3\rceil)}\\
&\leq&3\frac{p^{\alpha+\lceil\alpha/3\rceil}}{p-1}+p^{2(\alpha-\lceil\alpha/3\rceil)}\ll p^{\frac{4\alpha}{3}}.\\
\end{eqnarray*}
\end{proof}

As a consequence of this we see that for any $r$ with no prime factors in $\p_1$ we have 
$$\rho_1(r) \ll r^{\frac43}.$$

\section{The Sum of a Multiplicative Function in an Arithmetic Progression}

Let $g$ be a nonnegative multiplicative function supported on squarefree numbers which satisfies  
\begin{equation}\label{mult1}
\sum_{p\leq x}g(p)\log p=k\log x+O(1),
\end{equation}
for some $k>0$.  If $2\leq w<z$ we assume that 
\begin{equation}\label{mult2}
\prod_{w\leq p<z}(1+g(p))\ll \left(\frac{\log z}{\log w}\right)^k.
\end{equation}
We also suppose that 
\begin{equation}\label{mult3}
\sum_p g(p)^2 \log p<\infty.
\end{equation}
Under these assumptions Friedlander and Iwaniec, \cite[Theorem A.5]{opera}, show that 
\begin{equation}
  \label{mult4}
\sum_{m\leq x}g(m)=c_g(\log x)^k+O((\log x)^{k-1}),
\end{equation}
where
$$c_g=\frac{1}{\Gamma(k+1)}\prod_p(1-\frac{1}{p})^k(1+g(p)).$$
We require the following modified version of this result.

\begin{lem}\label{multlem}
Let $g$ be a nonnegative multiplicative function supported on squarefree numbers which satisfies (\ref{mult1}), (\ref{mult2}) and (\ref{mult3}) for some $k>0$.  Let $q>2$ be prime.  Suppose that $g(q)=0$ and that for all primes $p\equiv1\pmod q$ we have $g(p)=0$.  Finally suppose that if $a\not\equiv 0,1\pmod q$ then 
\begin{equation}\label{mult5}
\twosum{p\leq x}{p\equiv a\pmod q}g(p)\log p=\frac{1}{q-2}\,k\log x+O(1).
\end{equation}
Then, for any $a$ with $(a;q)=1$ we have 
$$\twosum{m\leq x}{m\equiv a\pmod q}g(m)=(\frac{c_g}{q-1}+o(1))(\log x)^k.$$
\end{lem}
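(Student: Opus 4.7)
The plan is Dirichlet character orthogonality modulo $q$. Writing $\mathbf{1}_{m\equiv a\pmod q} = \frac{1}{q-1}\sum_{\chi\pmod q}\overline{\chi(a)}\chi(m)$ turns the target into
$$\twosum{m\leq x}{m\equiv a\pmod q}g(m) = \frac{1}{q-1}\sum_{\chi\pmod q}\overline{\chi(a)}\,S_\chi(x),\qquad S_\chi(x) := \sum_{m\leq x}\chi(m)g(m),$$
so the task reduces to analysing the character sums $S_\chi(x)$.

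For the principal character $\chi_0$: since $g$ is supported on squarefrees with $g(q)=0$, the function $g$ vanishes at every multiple of $q$, giving $\chi_0(m)g(m)=g(m)$ for all $m$. The given hypotheses are precisely (\ref{mult1})--(\ref{mult3}), so applying the Friedlander--Iwaniec estimate (\ref{mult4}) yields
$$S_{\chi_0}(x) = c_g(\log x)^k + O((\log x)^{k-1}),$$
which, after multiplication by $1/(q-1)$, already provides the desired main term $\frac{c_g}{q-1}(\log x)^k$.

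For each non-principal $\chi$ the aim is $S_\chi(x) = o((\log x)^k)$. Using (\ref{mult5}), the vanishing $g(p)=0$ for $p\equiv 0,1\pmod q$, and the identity $\sum_{a=2}^{q-1}\chi(a) = -\chi(1) = -1$ (valid since $\sum_{a=1}^{q-1}\chi(a)=0$ for non-principal $\chi$), I compute
$$\sum_{p\leq x}\chi(p)g(p)\log p = \sum_{a=2}^{q-1}\chi(a)\Bigl(\frac{k\log x}{q-2} + O(1)\Bigr) = -\frac{k\log x}{q-2} + O(1).$$
Thus $\chi g$ satisfies an analogue of (\ref{mult1}) with a \emph{negative} effective exponent $-k/(q-2)$; the non-principal character has produced genuine cancellation at the prime level. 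Partial summation applied to the display above also shows that $\sum_p\chi(p)g(p)/p$ converges, which together with (\ref{mult3}) controls the Euler product $F_\chi(s) = \prod_p(1+\chi(p)g(p)p^{-s})$ near $s=1$. To transfer this prime-level cancellation to the partial sums, I would either adapt the elementary Friedlander--Iwaniec argument underlying (\ref{mult4}) to the complex multiplicative function $\chi g$ (so that the negative effective exponent produces decay rather than growth) or invoke a Selberg--Delange style analysis of $F_\chi(s)$; either route delivers $S_\chi(x) = o((\log x)^k)$.

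Summing the character contributions gives the main term $\frac{c_g}{q-1}(\log x)^k$ from $\chi_0$ and a total $o((\log x)^k)$ contribution from the $q-2$ non-principal characters, proving the lemma. The main obstacle is the bound on $S_\chi(x)$ for non-principal $\chi$: because $\chi g$ is complex-valued and unbounded, the nonnegativity hypotheses underlying (\ref{mult4}) do not apply verbatim, so the technical crux of the proof is converting the prime-level cancellation supplied by (\ref{mult5}) into global cancellation for the partial sums $S_\chi(x)$.
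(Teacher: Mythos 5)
Your character decomposition is a legitimate alternative route, and the pieces you do carry out are correct: the principal character contributes $\frac{c_g}{q-1}(\log x)^k$ via (\ref{mult4}), and your computation $\sum_{p\leq x}\chi(p)g(p)\log p=-\frac{k}{q-2}\log x+O(1)$ for non-principal $\chi$ is right. But the proof has a genuine gap exactly where you flag the ``technical crux'': you never actually establish $S_\chi(x)=o((\log x)^k)$. Neither of the two methods you gesture at is available off the shelf. The Friedlander--Iwaniec estimate (\ref{mult4}) genuinely uses nonnegativity (its proof compares $\sum g(m)\log m$ with $k\log x\sum g(m)$ and the resulting inequality/iteration is one-sided), so it does not apply verbatim to the complex function $\chi g$ with negative effective dimension. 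A Selberg--Delange analysis needs analytic control of $\prod_p(1+\chi(p)g(p)p^{-s})$ in a zero-free region to the left of $s=1$; hypotheses (\ref{mult1})--(\ref{mult5}) only give partial-sum information at $s=1$ with $O(1)$ errors and do not supply that continuation. Since the entire content of the lemma lives in this step, asserting that ``either route delivers'' the bound is not a proof.

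The paper sidesteps the issue by never introducing characters: it works directly with $M_g(x)=\sum_{m\leq x,\,m\equiv a}g(m)$, writes $\sum_{m\equiv a}g(m)\log m=\sum_n g(n)\sum_{p\leq x/n,\,p\equiv a\bar n}g(p)\log p+\text{(small)}$, and uses the support condition ($g(p)=0$ for $p\equiv 0,1$) plus (\ref{mult5}) to convert this into a Volterra-type relation
$$M_g(x)\log x-\Bigl(1-\tfrac{k}{q-2}\Bigr)\int_2^x M_g(t)\,\frac{dt}{t}=\frac{kc_g}{(k+1)(q-2)}(\log x)^{k+1}+O((\log x)^k),$$
which it then solves with an integrating factor $x^{-1}(\log x)^{-l-1}$, $l=1-\frac{k}{q-2}$. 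If you want to rescue your character approach, the honest way to close the gap is to run this same iteration on $S_\chi$ itself: you get $S_\chi(x)\log x=(1-\frac{k}{q-2})\int_2^x S_\chi(t)t^{-1}dt+O((\log x)^k)$ (the error controlled by $\sum_{n\leq x}g(n)\ll(\log x)^k$ from (\ref{mult2}), which does not need positivity of $\chi g$), and the same integrating-factor argument then yields $S_\chi(x)\ll(\log x)^{k-1+\epsilon}+(\log x)^{-k/(q-2)}=o((\log x)^k)$. At that point, however, you are doing the paper's work $q-2$ times over rather than once, so the decomposition buys nothing; as written, the proposal is incomplete.
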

\begin{proof}
Let 
$$M_g(x)=\twosum{m\leq x}{m\equiv a\pmod q}g(m).$$
We begin by considering 
$$\twosum{m\leq x}{m\equiv a\pmod q}g(m) \log m=\twosum{np\leq x}{np\equiv a\pmod q}g(np)\log p.$$
Using that $g$ is multiplicative and supported on squarefree numbers coprime to $q$ this can be written as 
$$\sum_{n\leq x}g(n)\twosum{p\leq x/n}{p\equiv a\overline n\pmod q}g(p)\log p-\twosum{np^2\leq x}{np^2\equiv a\pmod q}g(np)g(p)\log p.$$
From (\ref{mult2}) we get 
$$\sum_{n\leq x}g(n)\leq \prod_{p\leq x}(1+g(p))\ll (\log x)^k$$
and from (\ref{mult3}) we deduce  
$$\sum_{np^2\leq x}g(np)g(p)\log p\ll \sum_{n\leq x}g(n)\sum_p g(p)^2\log p\ll (\log x)^k.$$
Using these bounds as well  as (\ref{mult5}) the above sum becomes 
$$\frac{k}{q-2}\twosum{n\leq x}{n\not\equiv a\pmod q}g(n)(\log x-\log n)+O((\log x)^k).$$
We can write this as
$$\frac{k}{q-2}\left(\sum_{n\leq x}g(n)(\log x-\log n)-\twosum{n\leq x}{n\equiv a\pmod q}g(n)(\log x-\log n)\right)+O((\log x)^k).$$
From the bound (\ref{mult4}) we have 
$$\sum_{n\leq x}g(n)=c_g(\log x)^k+O((\log x)^{k-1})$$
and, summing by parts, 
$$\sum_{n\leq x}g(n)\log n=\frac{kc_g}{k+1}(\log x)^{k+1}+O((\log x)^k)).$$
Our sum is thus 
$$\frac{k}{q-2}\left(\frac{c_g}{k+1}(\log x)^{k+1}-\twosum{n\leq x}{n\equiv a\pmod q}g(n)(\log x-\log n)\right)+O((\log x)^k).$$
We therefore get
$$(q-2-k)\twosum{m\leq x}{m\equiv a\pmod q}g(m)\log m+k\log xM_g(x)-\frac{kc_g}{k+1}(\log x)^{k+1}\ll (\log x)^k.$$
Since 
$$\log x M_g(x)-\twosum{m\leq x}{m\equiv a\pmod q}g(m)\log m=\twosum{m\leq x}{m\equiv a\pmod q}g(m)\log \frac{x}{m}=\int_1^x M_g(t)t^{-1}\,dt$$
we have
$$M_g(x)\log x-(1-\frac{k}{q-2})\int_1^x M_g(t)t^{-1}\,dt-\frac{kc_g}{(k+1)(q-2)}(\log x)^{k+1}\ll (\log x)^k.$$
We therefore conclude that for $x\geq 2$
$$M_g(x)\log x-(1-\frac{k}{q-2})\int_2^x M_g(t)t^{-1}\,dt-\frac{kc_g}{(k+1)(q-2)}(\log x)^{k+1}\ll (\log x)^k.$$
Let 
$$l=1-\frac{k}{q-2}$$
so that this is 
$$M_g(x)\log x-l\int_2^x M_g(t)t^{-1}\,dt-\frac{kc_g}{(k+1)(q-2)}(\log x)^{k+1}\ll (\log x)^k.$$
Dividing by $x(\log x)^{l+1}$ we then get 
$$x^{-1}(\log x)^{-l}\left(M_g(x)-l(\log x)^{-1}\int_2^x M_g(t)t^{-1}\,dt-\frac{kc_g}{(k+1)(q-2)}(\log x)^{k}\right)$$
$$\ll x^{-1}(\log x)^{k-l-1}.$$
We integrate this from $2$ to $x$, replacing $x$ by $t$ and $t$ by $u$.  For any $\epsilon>0$ the RHS will be 
$$\int_2^xt^{-1}(\log t)^{k-l-1}\,dt\ll_\epsilon 1+(\log x)^{k-l+\epsilon},$$
and the LHS will be
$$\int_2^x t^{-1}(\log t)^{-l}\left(M_g(t)-l(\log t)^{-1}\int_2^t M_g(u)u^{-1}\,du-\frac{kc_g}{(k+1)(q-2)}(\log t)^{k}\right)\,dt.$$ 
Re-ordering the double integral we see that 
$$(\log x)^{-l}\int_2^xM_g(t)t^{-1}\,dt-\frac{kc_g}{(k+1)(q-2)}\int_2^x t^{-1}(\log t)^{k-l}\,dt\ll 1+(\log x)^{k-l+\epsilon}.$$
However 
$$M_g(x)\log x-\frac{kc_g}{(k+1)(q-2)}(\log x)^{k+1}+O((\log x)^k)=l\int_2^x M_g(t)t^{-1}\,dt$$
so this simplifies to 
\begin{eqnarray*}
M_g(x)&=&\frac{kc_g}{(k+1)(q-2)}\left((\log x)^k+l(\log x)^{l-1}\int_2^x t^{-1}(\log t)^{k-l}\,dt\right)\\
&&\hspace{2cm}\mbox{}+O((\log x)^{l-1}+(\log x)^{k-1+\epsilon}).
\end{eqnarray*}
We know that 
$$k-l=k-1+\frac{k}{q-2}>-1$$
so
\begin{eqnarray*}
M_g(x)&=&kc_g\frac{1+l(k-l+1)^{-1}}{(k+1)(q-2)}(\log x)^k+o((\log x)^k)\\
&=&kc_g\frac{(kq-2k+q-2)/(kq-k)}{(k+1)(q-2)}(\log x)^k+o((\log x)^k)\\
&=&\frac{c_g}{q-1}(\log x)^k+o((\log x)^k).\\
\end{eqnarray*}
\end{proof}

\section{The Sieve}

\subsection{The Sieve Decomposition}

Let $\ca=(a_n)$ be the sequence given by 
$$a_n=\twosum{(a,b)\in (0,N]^2}{C(a,b),bf(a,b)=n}1.$$
We will sieve $\ca$ by the set of primes
$$\p=\{p:p\notin \p_1,p\equiv 1\pmod q\}.$$
Let 
$$x=\max\{f(a,b):(a,b)\in (0,N]^2\}=(c+o(1))N^3,$$
for some constant $c$ which depends on $f$.  
We wish to prove a positive lower bound for the sifting function 
$$S(\ca,\p,x)=\sum_{(n;P(x))=1}a_n$$
where 
$$P(z)=\twoprod{p<z}{p\in\p}p.$$
Applying the Buchstab identity we get, for some $\alpha\in (\frac12,1)$, that 
$$S(\ca,\p,x)=S(\ca,\p,N^\alpha)-\twosum{N^\alpha\leq p<x}{p\in\p}S(\ca_p,\p,p).$$
If a prime $p$ divides $bf(a,b)$ then either $p|b$ or $p|f(a,b)$.  We may therefore write 
$$S(\ca,\p,x)\geq S(\ca,\p,N^\alpha)-\twosum{N^\alpha\leq p<x}{p\in\p}(S(\ca^{(1)}_p,\p,p)+S(\ca^{(2)}_p,\p,p))$$
where $\ca^{(1)}_p$ is the subsequence of $\ca_p$ coming from pairs $(a,b)$ with $p|b$ whereas $\ca^{(2)}_p$ is the subsequence coming from $p|f(a,b)$.

If $p|b$ we must have $p\leq N$ so we can truncate the sum over $\ca^{(1)}_p$ to $p\leq N$.  As our level of distribution, Lemma \ref{mainlod}, is only nontrivial for $D_1D_2\leq N^2$ we split the sum over $\ca^{(2)}_p$ at $N^\beta$ for some $\beta\in(\frac32,2)$.  We conclude that 
$$S(\ca,\p,x)\geq S_1-S_2-S_3-S_4$$
where 
$$S_1=S(\ca,\p,N^\alpha),$$
$$S_2=\twosum{N^\alpha\leq p\leq N}{p\in\p}S(\ca^{(1)}_p,\p,p),$$
$$S_3=\twosum{N^\alpha\leq p<N^\beta}{p\in\p}S(\ca^{(2)}_p,\p,p)$$
and 
$$S_4=\twosum{N^\beta\leq p<x}{p\in\p}S(\ca^{(2)}_p,\p,p).$$
We then need a lower bound for $S_1$ and upper bounds for $S_2,S_3$ and $S_4$.  All our bounds will eventually depend on the $\beta$-sieve as given by Friedlander and Iwaniec in \cite[Theorem 11.13]{opera}.  We let $A_1,B_1$ denote the constants $A,B$ in the sieve of dimension $\frac{2}{q-1}$ and $A_2,B_2$ those for the sieve of dimension $\frac{q-2}{q-1}$.  These are the only sieves we will use.  

Throughout this section $q,f$ and $\Delta$ are fixed.  All use of the notation $o$ is as $N\rightarrow \infty$.

\subsection{The Sum $S_1$}

We have 
$$S_1=S(\ca,\p,N^\alpha).$$
 Since $\alpha<1$ we can take $D_1=1$ and $D_2=N^\alpha$ in Lemma \ref{mainlod}.  This shows that we can apply a lower bound sieve of level $N^\alpha$ as the remainder term is $O(N^{2-\delta})$ for some $\delta>0$.  Using the notation of \cite{opera} we have 
$$X=\frac{N^2}{\Delta^2}$$
and $g(p)$ is the multiplicative function given by 
$$g(p)=\begin{cases}
\frac{\rho_2(p)}{p^2} & p\in\p\\
0 & \text{Otherwise}.\\
\end{cases}$$
It follows from Lemma \ref{rhoprod} that the sieve dimension is $\frac{2}{q-1}$.  If $q>5$ then
$$\frac{2}{q-1}<\frac12,$$
and so the sifting limit is $1$.  We may therefore use the lower bound sieve to deduce that 
$$S_1\geq \frac{(B_1+o(1))N^2}{\Delta^2}\twoprod{p<N^\alpha}{p\in\p}(1-\frac{\rho_2(p)}{p^2})+O(N^{2-\delta}).$$
Applying Lemma \ref{rhoprod} we conclude that 
$$S_1\geq \frac{(c_2(f,q)B_1+o(1))N^2}{\Delta^2(\alpha \log N)^{\frac{2}{q-1}}}.$$
Finally, for any $\epsilon>0$ we can choose $\alpha$ sufficiently close to $1$ in terms of $\epsilon$ to get the bound 
$$S_1\geq \frac{(c_2(f,q)B_1-\epsilon+o(1)N^2}{\Delta^2(\log N)^{\frac{2}{q-1}}}.$$

\subsection{The Sum $S_2$}

In our bound for $S_2$ we will exploit the fact that $\alpha$ may be taken as close to $1$ as we require.  We therefore do not need to give a bound which is as sharp as possible.  It is enough to show that for any $\epsilon>0$ we can choose $\alpha<1$ depending on $\epsilon$ such that 
$$S_2\leq \frac{(\epsilon+o(1))N^2}{(\log N)^{\frac{2}{q-1}}}.$$

We have 
$$S_2\leq \twosum{N^\alpha\leq p\leq N}{p\in\p}S(\ca^{(1)}_p,\p,N^\alpha).$$
For each pair $(a,b)$ counted by $S_2$ we may write $b=pr$ where
$$p\in\p\cap [N^\alpha,N],$$
$$r\leq N^{1-\alpha}=R$$
and 
$$(r;P(R))=1.$$
In addition we have
$b\equiv b_0\pmod \Delta$.  
By our construction of $b_0$ we know that for each $p'|\Delta$ there exists an $l$ for which $p'^l|\Delta$ and 
$$b_0\not\equiv 0\pmod{p'^l}.$$
It follows that for each such prime its power dividing $b$ is the same as that dividing $b_0$.  For $p\in\p$ we have $(\Delta;p)=1$.  It follows that for each $p'|\Delta$ the power of $p'$ dividing $r$ is precisely that dividing $b_0$.  In other words we may write 
$$r=(b_0;\Delta)r'\text{ with }(r';\Delta)=1.$$ 
Given such an $r$ and a pair $(a,b)$ satisfying $C(a,b)$ the condition $r|b$ is equivalent to $r'|b$.

We may therefore write 
$$S_2\leq \twosum{r\leq R/(b_0;\Delta)}{(r;P(R)\Delta)=1}S_2(r)$$
where 
$$S_2(r)=\#\{(a,b)\in (0,N]^2:C(a,b),r|b,b/r(b_0;\Delta)\in\p,(bf(a,b);P(N^\alpha))=1\}.$$
Note that the variable of summation, $r$, is $r'$ in the above notation.

Recall that
$$\p'=\{p:p\notin\p_1,p\not\equiv 1\pmod q\}$$
and let
$$P'(z)=\twoprod{p<z}{p\in\p'}p.$$
If we let $z=N^\delta$ for some $\delta>0$ then provided $\delta<\alpha$ we have 
$$S_2(r)\leq \#\{(a,b)\in (0,N]^2:C(a,b),r|b,(bf(a,b);P(z))=(b/r;P'(z))=1\}.$$
Suppose that $\mu_1^+,\mu_2^+$ are upper bound sieves of level $z$. We have 
$$S_2(r)\leq \twosum{(a,b)\in(0,N]^2}{r|b,C(a,b)}(\sum_{d\mid\, (P(z);bf(a,b))}\mu_1^+(d))(\sum_{e\mid\, (b/r;P'(z))}\mu_2^+(e)).$$ 
Reordering the summations we get 
\begin{eqnarray*}
S_2(r)&\leq &\sum_{d|P(z)}\sum_{e|P'(z)}\mu_1^+(d)\mu_2^+(e)\#\{(a,b)\in (0,N]^2:C(a,b),re|b,d|bf(a,b)\}\\
&=&\sum_{d|P(z)}\sum_{e|P'(z)}\mu_1^+(d)\mu_2^+(e)R_1(d,re).\\
\end{eqnarray*}
If $\delta$ is sufficiently small so that 
$$1-\alpha+2\delta<1$$
then 
$$rde\leq N.$$
Furthermore $(re;d)=(dre;\Delta)=1$ so Lemma \ref{fliplod} applies and we get 
$$S_2(r)\leq\sum_{d|P(z)}\sum_{e|P'(z)}\mu_1^+(d)\mu_2^+(e)\left(\frac{N^2\rho_2(d)}{d^2re\Delta^2}+O_\epsilon(Nd^\epsilon)\right).$$ 
The contribution of the error term to $S_2$ is bounded by 
$$\sum_{r\leq R}N^{1+2\delta+\epsilon}\ll N^{2-\alpha+2\delta+\epsilon}=o(\frac{N^2}{(\log N)^{\frac{2}{q-1}}}),$$
in view of our assumption on the size of $\delta$.  

The main term in the above estimate for $S_2(r)$ is 
$$\frac{N^2}{r\Delta^2}\left(\sum_{d|P(z)}\mu_1^+(d)\frac{\rho_2(d)}{d^2}\right)\left(\sum_{e|P'(z)}\frac{\mu_2^+(e)}{e}\right).$$
The two sums may now be estimated using the sieve.  We let $(\mu_1^+)$ be a sieve of dimension $\frac{2}{q-1}$.  It follows using Lemma \ref{rhoprod} that 
\begin{eqnarray*}
\sum_{d|P(z)}\mu_1^+(d)\frac{\rho_2(d)}{d^2}&\leq&(A_1+o(1))\twoprod{p<z}{p\in\p}(1-\frac{\rho_2(p)}{p^2})\\
&=&(A_1+o(1))\frac{c_2(f,q)}{(\delta\log N)^{\frac{2}{q-1}}}.\\
\end{eqnarray*} 
We let $(\mu_2^+)$ be a sieve of dimension $\frac{q-2}{q-1}$ and thus we get 
$$\sum_{e|P'(z)}\frac{\mu_2^+(e)}{e}\leq (A_2+o(1))\twoprod{p<z}{p\in\p'}(1-\frac{1}{p}).$$
Finally we have the bound 
$$\twosum{r\leq R/(b_0;\Delta)}{(r;P(R)\Delta=1}\frac{1}{r}\leq \twoprod{p<R}{p\in\p'}(1-\frac{1}{p})^{-1}.$$
By taking $\alpha$ sufficiently close to $1$ we can assume that $R<z$.  It follows that 
$$\twoprod{p<z}{p\in\p'}(1-\frac{1}{p})\twoprod{p<R}{p\in\p'}(1-\frac{1}{p})^{-1}=\twoprod{R<p<z}{p\in\p'}(1-\frac{1}{p})\sim (\frac{1-\alpha}{\delta})^{\frac{q-2}{q-1}}.$$
We finally conclude that 
$$S_2\leq \left(\frac{1-\alpha}{\delta}\right)^{\frac{q-2}{q-1}}\frac{(A_1A_2c_2(f,q)+o(1))N^2}{\Delta^2(\delta\log N)^{\frac{2}{q-1}}}.$$
It follows that for any $\epsilon>0$ there exists an $\alpha<1$ depending on $\epsilon$ such that 
$$S_2\leq\frac{(\epsilon+o(1))N^2}{(\log N)^{\frac{2}{q-1}}}.$$

\subsection{The Sum $S_3$}

Let $S_3(P_1,P_2)$ denote the part of $S_3$ with $P_1\leq p<P_2$.  We have 
$$S_3(P_1,P_2)\leq \twosum{P_1\leq p<P_2}{p\in\p}S(\ca^{(2)}_p,\p,P_1).$$
We will apply an upper bound sieve to each summand separately.  For each prime $p$ and each $d\in\N$ we have 
$$\sum_{n\equiv 0\pmod d}(a^{(2)}_p)_n=R(p,d).$$
If $d<p$ then clearly $(d;p)=1$.  We will apply Lemma \ref{mainlod} with $D_1=P_2$ and 
$$D_2=D_2(P_1,P_2)=\min(N^{1-\gamma},P_1,\frac{N^{2-\gamma}}{P_2})$$
for some $\gamma>0$ which we will choose arbitrarily small.  We then have 
$$\twosum{p\leq P_2,d\leq D_2}{(d;\Delta)=1,\mu(d)^2=1}|R(p,d)-\frac{\rho_1(p)\rho_2(d)N^2}{p^2d^2\Delta^2}|\ll N^{2-\delta},$$
for some $\delta>0$ which depends on $\gamma$.

Applying the upper bound sieve of dimension $\frac{2}{q-1}$ results in 
$$S_3(P_1,P_2)\leq \frac{(A_1+o(1))N^2}{\Delta^2}\left(\twoprod{p<D_2}{p\in\p}(1-\frac{\rho_2(p)}{p^2})\right)\twosum{P_1\leq p<P_2}{p\in\p}\frac{\rho_1(p)}{p^2}+O(N^{2-\delta}).$$
We can evaluate the product using Lemma \ref{rhoprod} to get 
$$S_3(P_1,P_2)\leq \frac{(c_2(f,q)A_1+o(1))N^2}{\Delta^2(\log D_2)^{\frac{2}{q-1}}}\twosum{P_1\leq p<P_2}{p\in\p}\frac{\rho_1(p)}{p^2}+O(N^{2-\delta}).$$
In addition, using our previous convention that the value  $c$ may vary from line to line, we have 
\begin{eqnarray*}
\twosum{p\leq x}{p\in\p}\frac{\rho_1(p)}{p^2}&=&\twosum{p\leq x}{p\in\p}\frac{(p-1)\nu(p)+1}{p^2}\\
&=&\twosum{p\leq x}{p\in\p}\frac{\nu(p)}{p}+c+o(1)\\
&=&\frac{1}{q-1}\log\log x+c+o(1).\\
\end{eqnarray*}
We first consider $S_3(N^\alpha,N^{2-\alpha})$.  As in the previous section we will take $\alpha$ close to $1$ which is enough to make this part of the sum small.  We take $D_2=N^\alpha$, getting 
$$S_3(N^\alpha,N^{2-\alpha})\leq \frac{1}{q-1}(\log(2-\alpha)-\log \alpha)\frac{(c_2(f,q)A_1+o(1))N^2}{\Delta^2(\alpha\log N)^{\frac{2}{q-1}}}.$$
It follows that for any $\epsilon>0$ we can choose $\alpha$ sufficiently close to $1$ to deduce that 
$$S_3(N^\alpha,N^{2-\alpha})\leq \frac{(\epsilon+o(1))N^2}{(\log N)^{\frac{2}{q-1}}}.$$

It remains to estimate $S_3(N^{2-\alpha},N^\beta)$. We divide  this range into dyadic intervals $[P_1,2P_1)$.  For each such interval we have
$$D_2=\frac{N^{2-\gamma}}{P_1}.$$
By taking $\gamma<2-\beta$ we have $D_2\geq 1$ for all the dyadic intervals.  In addition if $p\sim P_1$ then 
$$(\log D_2)^{-\frac{2}{q-1}}=(\log\frac{N^{2-\gamma}}{P_1})^{-\frac{2}{q-1}}\leq (\log\frac{N^{2-\gamma}}{p})^{-\frac{2}{q-1}}.$$
We therefore have 
$$S_3(P_1,2P_1)\leq \frac{(c_2(f,q)A_1+o(1))N^2}{\Delta^2}\twosum{P_1\leq p<2P_1}{p\in\p}\frac{\rho_1(p)}{p^2(\log\frac{N^{2-\gamma}}{p})^{\frac{2}{q-1}}}+O(N^{2-\delta})$$
and thus 
$$S_3(N^{2-\alpha},N^\beta)\leq \frac{(c_2(f,q)A_1+o(1))N^2}{\Delta^2(\log N)^{\frac{2}{q-1}}}\twosum{N^{2-\alpha}\leq p<N^\beta}{p\in\p}\frac{\rho_1(p)}{p^2(2-\gamma-\log p/\log N)^{\frac{2}{q-1}}}.$$
We have 
$$\twosum{N\leq p<t}{p\in \p}\frac{\rho_1(p)}{p^2}=\frac{1}{q-1}(\log\log t-\log\log N)+o(1),$$
so we can sum by parts to get 
\begin{eqnarray*}
\lefteqn{\twosum{N^{2-\alpha}\leq p<N^\beta}{p\in\p}\frac{\rho_1(p)}{p^2(2-\gamma-\log p/\log N)^{\frac{2}{q-1}}}}\\
&\leq&\twosum{N\leq p<N^\beta}{p\in\p}\frac{\rho_1(p)}{p^2(2-\gamma-\log p/\log N)^{\frac{2}{q-1}}}\\
&=&\frac{1}{q-1}\frac{\log\beta}{(2-\gamma-\beta)^{\frac{2}{q-1}}}-\frac{2}{(q-1)^2}\int_N^{N^\beta}\frac{\log\log t-\log\log N}{t\log N(2-\gamma-\log t/\log N))^{\frac{q+1}{q-1}}}\,dt+o(1)\\
&=&\frac{1}{q-1}\frac{\log\beta}{(2-\gamma-\beta)^{\frac{2}{q-1}}}-\frac{2}{(q-1)^2}\int_1^{\beta}\log s(2-\gamma-s)^{-\frac{q+1}{q-1}}\,ds+o(1)\\
&=&\frac{1}{q-1}\int_1^\beta(2-\gamma-s)^{-\frac{2}{q-1}}\,\frac{ds}{s}+o(1).\\
\end{eqnarray*}
We conclude that 
$$S_3(N^{2-\alpha},N^\beta)\leq \frac{(c_2(f,q)A_1+o(1))N^2}{\Delta^2(q-1)(\log N)^{\frac{2}{q-1}}}\int_1^\beta(2-\gamma-s)^{-\frac{2}{q-1}}\,\frac{ds}{s}.$$
Combining the above bounds we see that for any $\epsilon>0$ we can choose $\alpha$ sufficiently close to $1$ and $\gamma$ sufficiently small to get the bound 
$$S_3\leq \frac{(c_2(f,q)A_1+\epsilon+o(1))N^2}{\Delta^2(q-1)(\log N)^{\frac{2}{q-1}}}\int_1^\beta(2-s)^{-\frac{2}{q-1}}\,\frac{ds}{s}.$$

\subsection{The Sum $S_4$}

We have
$$S_4\leq\twosum{N^\beta\leq p<x}{ p\in\p}S(\ca^{(2)}_p,\p,N^\beta).$$
For each pair $(a,b)$ counted by $S_4$ we can write $f(a,b)=pr$ where 
$$p\in [N^\beta,x]\cap \p,$$
$$r\leq \frac{x}{N^\beta}=R$$
and 
$$(r;P(R))=1.$$
Let $f_0=f(a_0,b_0)$.  For each prime $p'|\Delta$ we know that there exists an $l$ for which $p'^l|\Delta$ and 
$$f_0\not\equiv0\pmod{p'^l}.$$
It follows that the power of $p'$ dividing $f(a,b)$ is the same as that dividing $f_0$.  Since $(p;\Delta)=1$ this power is the same as that dividing $r$.  In other words we can write 
$$r=(f_0;\Delta)r',\quad (r';\Delta)=1.$$
Given a pair $(a,b)$ with $C(a,b)$ the condition $r|f(a,b)$ is equivalent to $r'|f(a,b)$.

The prime $q$ divides $\Delta$.  In addition since $p\in\p$ we have $p\equiv 1\pmod q$.  It follows that there exists $r_0$ depending only on $a_0,b_0,\Delta$ with $(r_0;q)=1$ such that 
$$r'\equiv r_0\pmod q.$$

We may now write 
$$S_4\leq \twosum{r\leq R/(f_0;\Delta)}{(r;P(R)\Delta)=1,r\equiv r_0\pmod q}S_4(r)$$
where
$$S_4(r)=\#\{(a,b)\in (0,N]^2:C(a,b),r|f(a,b),(f(a,b)/r;P'(z'))=(bf(a,b);P(z))=1\},$$
for some $z,z'$ satisfying $0\leq z,z'\leq N^\beta$.    Note that the variable of summation, $r$, is $r'$ in the above notation.  We will split the sum over $r$ into dyadic segments $R_1\leq r<2R_1$.

Let $\mu_1^+,\mu_2^+$ be upper bound sieves of levels  $D'$ and $D$, respectively, where $D,D'$ depend on $R_1$.  It follows that 
\begin{eqnarray*}
S_4(r)&\leq&\twosum{d|P'(z')}{(d;r)=1}\sum_{e|P(z)}\mu_1^+(d)\mu_2^+(e)\#\{(a,b)\in (0,N]^2:C(a,b),dr|f(a,b),e|bf(a,b)\}\\
&=&\twosum{d|P'(z')}{(d;r)=1}\sum_{e|P(z)}\mu_1^+(d)\mu_2^+(e)R(dr,e).\\
\end{eqnarray*}
Since $(dr;e)=(dre;\Delta)=1$ we may apply Lemma \ref{mainlod}.  If we write $D=N^\eta,D'=N^{\eta'}$ this requires that 
$$\eta\leq 1-\delta$$
and 
$$\eta+\eta'+\frac{\log R_1}{\log N}\leq 2-\delta,$$
for some $\delta>0$ which we will eventually take arbitrarily small.

Given these assumptions on $\eta$ and $\eta'$ the contribution of the error term to $S_4$ is $o(\frac{N^2}{\log N^{\frac{2}{q-1}}})$.  

It remains to deal with the main term coming from Lemma \ref{mainlod}.  This is 
$$\frac{N^2\rho_1(r)}{r^2\Delta^2}\left(\twosum{d|\,P'(z')}{(d;r)=1}\mu_1^+(d)\frac{\rho_1(d)}{d^2}\right)\left(\sum_{e|\,P(z)}\mu_2^+(e)\frac{\rho_2(e)}{e^2}\right).$$
The two terms can now be estimated using the sieve.  Considering the results of Lemma \ref{rhoprod} we let $\mu_1^+$ be a sieve of dimension $\frac{q-2}{q-1}$ and we let $\mu_2^+$ be a sieve of dimension $\frac{2}{q-1}$.  We may assume that $z=D$ and either $z'=D'$ or $z'=N^\beta\leq D'\leq N^2$.  It follows that the values of $z,z'$ do not affect the sieve upper bounds and therefore 
\begin{eqnarray*}
\twosum{d|P'(z')}{(d;r)=1}\mu_1^+(d)\frac{\rho_1(d)}{d^2}&\leq& (A_2+o(1))\twoprod{p<D'}{p\in\p'}(1-\frac{\rho_1(p)}{p^2})\prod_{p|r}(1-\frac{\rho_1(p)}{p^2})^{-1}\\
&=&\frac{(c_1(f,q)A_2+o(1))}{(\eta'\log N)^{\frac{q-2}{q-1}}}\prod_{p|r}(1-\frac{\rho_1(p)}{p^2})^{-1}\\
\end{eqnarray*}
and
$$\sum_{e|P(z)}\mu_2^+(e)\frac{\rho_2(e)}{e^2}\leq(A_1+o(1))\twoprod{p<z}{p\in\p}(1-\frac{\rho_2(p)}{p^2})=\frac{(c_2(f,q)A_1+o(1))}{(\eta\log N)^{\frac{2}{q-1}}}.$$

The contribution to our upper bound from the $\eta,\eta'$ is then 
$$\frac{1}{\eta'^{\frac{q-2}{q-1}}\eta^{\frac{2}{q-1}}}.$$
Therefore, to give an optimal result,  we want to maximise 
$$\eta'^{q-2}\eta^2$$
subject to the constraints 
$$\eta\leq 1-\delta$$
and 
$$\eta+\eta'\leq2-\frac{\log R_1}{\log N}-\delta.$$
By monotonicity it is clear that the maximum occurs when we have equality in the last constraint so 
$$\eta'=2-\delta-\eta-\frac{\log R_1}{\log N}.$$
We therefore wish to maximise 
$$(2-\delta-\eta-\frac{\log R_1}{\log N})^{q-2}\eta^2$$
for $\eta\in (0,1-\delta]$.  Taking logs we maximise 
$$(q-2)\log(2-\delta-\eta-\frac{\log R_1}{\log N})+2\log\eta$$
so we solve 
$$-(q-2)(2-\delta-\eta-\frac{\log R_1}{\log N})^{-1}+2\eta^{-1}=0.$$
This gives 
$$\eta=2q^{-1}(2-\delta-\frac{\log R_1}{\log N}).$$
Observe that this is in $(0,1-\delta]$ if $q\geq 5$ and $\delta$ is sufficiently small.  We then get 
$$\eta'=2-\delta-\eta-\frac{\log R_1}{\log N}=(1-2q^{-1})(2-\delta-\frac{\log R_1}{\log N}).$$
If $q\geq 5$ and $\delta$ is sufficiently small then $\eta'>0$.  The factor coming from $\eta,\eta'$ is thus 
$$\left((1-2q^{-1})(2-\delta-\frac{\log R_1}{\log N})\right)^{-\frac{q-2}{q-1}}\left(2q^{-1}(2-\delta-\frac{\log R_1}{\log N})\right)^{-\frac{2}{q-1}}.$$
This increases as we increase $R_1$ so we can replace $R_1$ by $r$ getting the smooth weight 
$$w(r,\delta)=(1-2q^{-1})^{-\frac{q-2}{q-1}}(2q^{-1})^{-\frac{2}{q-1}}\left(2-\delta-\frac{\log R_1}{\log N}\right)^{-\frac{q}{q-1}}.$$
Combining all of the above we see that the main term in our estimate for $S_4$ is 
$$\frac{A_1A_2c_1(f,q)c_2(f,q)N^2}{\Delta^2(\log N)^{\frac{q}{q-1}}}\twosum{r\leq R/(f_0;\Delta)}{r\equiv r_0\pmod q}w(r,\delta)g(r),$$
where $g(r)$ is the multiplicative function which is $0$ unless all the prime factors of $r$ are in $\p'$, in which case it is given by 
$$g(r)=\frac{\rho_1(r)}{r^2}\prod_{p|r}(1-\frac{\rho_1(p)}{p^2})^{-1}.$$

To estimate the sum over $r$ we begin by dealing with the $r$ which are  squarefree.  

\begin{lem}
The multiplicative function $g$, when restricted to squarefree numbers, satisfies all the hypotheses of Lemma \ref{multlem}.
\end{lem}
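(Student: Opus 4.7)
The plan is to verify each of the hypotheses of Lemma \ref{multlem} for the multiplicative function $g$ defined by $g(r) = \frac{\rho_1(r)}{r^2}\prod_{p\mid r}\bigl(1-\frac{\rho_1(p)}{p^2}\bigr)^{-1}$ when restricted to squarefrees whose prime factors lie in $\p'$, and zero otherwise. On a prime $p\in\p'$ this simplifies to $g(p)=\frac{\rho_1(p)}{p^2-\rho_1(p)}$, and since $\rho_1(p)=(p-1)\nu(p)+1\ll p$ we obtain the estimate
$$g(p)=\frac{\nu(p)}{p}+O\!\left(\frac{1}{p^2}\right)$$
for $p\in\p'$. The pointwise vanishing conditions are immediate: $g(q)=0$ since $q\in\p_1$, hence $q\notin\p'$; and if $p\equiv 1\pmod q$ then either $p\in\p_1$ or $p\in\p$, and in both cases $p\notin\p'$, so $g(p)=0$.

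Next I would verify the growth conditions. For (\ref{mult3}), the bound $g(p)\ll 1/p$ gives $\sum_p g(p)^2\log p\ll\sum_p p^{-2}\log p<\infty$. For (\ref{mult1}), I would apply the Prime Ideal Theorem for the cubic field $L$, which yields $\sum_{p\le x}\nu(p)p^{-1}\log p=\log x+O(1)$, and then subtract off the contribution from primes $p\equiv 1\pmod q$. The latter is handled by a partial summation from the Hecke $L$-function analysis in Lemma \ref{hecke}, upgraded to its logarithmic form: this shows $\sum_{p\le x,\,p\equiv 1(q)}\nu(p)p^{-1}\log p=\frac{1}{q-1}\log x+O(1)$. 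Combining, and absorbing the $O(p^{-2})$ remainder and the contribution from the finite set $\p_1$ into $O(1)$, yields
$$\sum_{p\le x}g(p)\log p=\left(1-\frac{1}{q-1}\right)\log x+O(1)=\frac{q-2}{q-1}\log x+O(1),$$
so that $k=\frac{q-2}{q-1}$. The bound (\ref{mult2}) then follows from (\ref{mult1}) by the standard Mertens-type argument (take logarithms, expand $\log(1+g(p))=g(p)+O(g(p)^2)$, and apply partial summation together with (\ref{mult3})).

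It remains to establish the arithmetic progression hypothesis (\ref{mult5}). For $a\not\equiv 0,1\pmod q$ I would again expand $g(p)=\nu(p)/p+O(p^{-2})$ and reduce to estimating $\sum_{p\le x,\,p\equiv a(q)}\nu(p)p^{-1}\log p$. Using orthogonality to write this as $\frac{1}{q-1}\sum_{\chi\pmod q}\overline{\chi(a)}\sum_{p\le x}\chi(p)\nu(p)p^{-1}\log p$, Lemma \ref{hecke} guarantees that for every nontrivial $\chi$ the Hecke $L$-function $\zeta_L(s,\chi)$ is holomorphic and nonvanishing at $s=1$, so the corresponding inner sum is $O(1)$. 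Only the principal character contributes a main term, giving
$$\twosum{p\le x}{p\equiv a(q)}\frac{\nu(p)\log p}{p}=\frac{1}{q-1}\log x+O(1)=\frac{1}{q-2}\cdot k\log x+O(1),$$
which is exactly (\ref{mult5}).

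The main obstacle is the verification of (\ref{mult5}) and (\ref{mult1}) in their logarithmic (rather than double-logarithmic) form, since the Dirichlet series arguments recorded earlier in the paper are stated in terms of $\sum \nu(p)/p$. Upgrading these to sums weighted by $\log p$ requires invoking nonvanishing and holomorphy of each $\zeta_L(s,\chi)$ at $s=1$ (from Lemma \ref{hecke}) together with a standard Tauberian/partial-summation step; once this is in place, all the other hypotheses reduce to routine manipulations.
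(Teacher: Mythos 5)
Your proof is correct and follows essentially the same route as the paper's: the vanishing conditions come from $q\mid\Delta$ and the definition of $\p'$, the key estimate $\sum_{p\le x,\,p\equiv a\,(q)}g(p)\log p=\frac{1}{q-1}\log x+O(1)$ comes from expanding $g(p)=\nu(p)/p+O(p^{-2})$ and the Hecke $L$-function argument of Lemma \ref{hecke}, and (\ref{mult3}) follows from $g(p)\ll 1/p$. The only cosmetic difference is that the paper verifies (\ref{mult2}) directly from the product identity $\prod(1+g(p))=\prod_{p\in\p'}(1-\rho_1(p)/p^2)^{-1}$ and Lemma \ref{rhoprod}, whereas you derive it from (\ref{mult1}) by a Mertens-type argument; both are valid.
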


\begin{proof}
Since $q|\Delta$ we know that $g(q)=0$.  In addition if $g(p)\ne 0$ then $p\in\p'$ so $p\not\equiv 1\pmod q$.

If $a\not\equiv 0,1\pmod q$ then
\begin{eqnarray*}
\twosum{p\leq x}{p\equiv a\pmod q}g(p)\log p&=&\twosum{p\leq x}{p\equiv a\pmod q}(1-\frac{\rho_1(p)}{p^2})^{-1}\frac{\rho_1(p)}{p^2}\log p+O(1)\\
&=&\twosum{p\leq x}{p\equiv a\pmod q}\frac{\rho_1(p)}{p^2}\log p+O(1)\\
&=&\twosum{p\leq x}{p\equiv a\pmod q}\frac{(p-1)\nu(p)+1}{p^2}\log p+O(1)\\
&=&\frac{1}{q-1}\log x+O(1).\\
\end{eqnarray*}
This establishes (\ref{mult1}) and (\ref{mult5}) with 
$$k=\frac{q-2}{q-1}.$$
If $2\leq w<z$ then, by Lemma \ref{rhoprod},  we have 
$$\prod_{w\leq p<z}(1+g(p))=\twoprod{w\leq p<z}{p\in\p'}(1-\frac{\rho_1(p)}{p^2})^{-1}\ll (\frac{\log z}{\log w})^{\frac{q-2}{q-1}},$$
so (\ref{mult2}) holds.  Finally 
$$\sum_p g(p)^2\log p= \sum_{p\in\p'} (1-\frac{\rho_1(p)}{p^2})^{-2}\frac{\rho_1(p)^2}{p^4}\log p\ll \sum_p\frac{1}{p^2}\log p<\infty$$
and therefore (\ref{mult3}) also holds.
\end{proof}

Summing by parts and applying Lemma \ref{multlem} we have 
\begin{eqnarray*}
\lefteqn{\twosum{r\leq R/(f_0;\Delta)}{\mu(r)\ne 0,\,r\equiv r_0\!\!\pmod q}w(r,\delta)g(r)}\\
&\leq&\twosum{r\leq R}{\mu(r)\ne 0,\,r\equiv r_0\!\!\pmod q}w(r,\delta)g(r)\\
&=&w(R,\delta)\twosum{r\leq R}{\mu(r)\ne 0,\,r\equiv r_0\!\!\pmod q}g(r)-\int_1^R \left(\twosum{r\leq t}{\mu(r)\ne 0,\,r\equiv r_0\!\!\pmod q}g(r)\right)w'(t)\,dt\\
&=&(c_g+o(1))\frac{1}{q-1}\left(w(R,\delta)(\log R)^{\frac{q-2}{q-1}}-\int_1^R w'(t)(\log t)^{\frac{q-2}{q-1}}\,dt\right)\\
&=&(c_g+o(1))\frac{q-2}{(q-1)^2}\int_1^R w(t,\delta)(\log t)^{\frac{-1}{q-1}}t^{-1}\,dt\\
&=&(c_g+o(1))\frac{q-2}{(q-1)^2}(\log N)^{\frac{q-2}{q-1}}\int_0^{\log R/\log N} w(N^s,\delta)s^{\frac{-1}{q-1}}\,ds,\\
\end{eqnarray*}
where 
$$c_g=\frac{1}{\Gamma(2-\frac{1}{q-1})}\prod_p(1-\frac{1}{p})^{\frac{q-2}{q-1}}(1+g(p)).$$
Observe that 
$$w(N^s,\delta)=(1-2q^{-1})^{-\frac{q-2}{q-1}}(2q^{-1})^{-\frac{2}{q-1}}(2-\delta-s)^{-\frac{q}{q-1}}$$
does not depend on $N$.  In addition 
$$\frac{\log R}{\log N}=\frac{\log x}{\log N}-\beta=3-\beta+o(1)$$
as $N\rightarrow \infty$.  We may therefore replace the upper limit of integration by $3-\beta$ at the cost of an error which is $o(1)$.  We conclude that 
$$\twosum{r\leq R/(f_0;\Delta)}{\mu(r)\ne 0,\,r\equiv r_0\!\!\pmod q}w(r,\delta)g(r)\leq (c_g+o(1))\frac{q-2}{(q-1)^2}(\log N)^{\frac{q-2}{q-1}}\int_0^{3-\beta} w(N^s,\delta)s^{\frac{-1}{q-1}}\,ds.$$

Let 
$$W(s)=w(N^s,0)s^{\frac{-1}{q-1}}.$$
For any $\epsilon>0$ we can choose a sufficiently small $\delta$ to get 
$$\int_0^{3-\beta} w(N^s,\delta)s^{\frac{-1}{q-1}}\,ds\leq \int_0^{3-\beta} W(s)\,ds+\epsilon+o(1)$$
and thus 
$$\twosum{r\leq R/(f_0;\Delta)}{\mu(r)\ne 0,\,r\equiv r_0\!\!\!\pmod q}w(r,\delta)g(r)\leq (c_g+\epsilon+o(1))\frac{q-2}{(q-1)^2}(\log N)^{\frac{q-2}{q-1}}\int_0^{3-\beta}W(s)\,ds.$$

It remains to deal with the sum over those $r$ which are not squarefree.

\begin{lem}\label{notsquarefree}
For any $\epsilon>0$ there exists a $P_1$, depending on $\epsilon,q$ and $f$ but not on $N$,  such that if we include all primes $p\leq P_1$ in $\p_1$ then 
$$\twosum{r\leq R/(f_0;\Delta)}{\mu(r)=0,\,r\equiv r_0\!\!\!\pmod q}w(r,\delta)g(r)\leq (\epsilon+o(1))(\log N)^{\frac{q-2}{q-1}}.$$
\end{lem}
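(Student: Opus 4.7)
The plan is first to show that the smooth weight $w(r,\delta)$ is uniformly bounded on the range of summation, and then to reduce the problem to estimating $\sum_{\mu(r)=0,\,r\leq R}g(r)$, which in turn will be controlled by isolating a prime $p>P_1$ with $p^2\mid r$. Writing
$$w(r,\delta)=C_q\Big(2-\delta-\tfrac{\log r}{\log N}\Big)^{-\frac{q}{q-1}},$$
and noting that $R=x/N^\beta$ satisfies $\log R/\log N=3-\beta+o(1)<\tfrac32$, one sees that $2-\delta-\log r/\log N$ is bounded below by a positive constant for every $r\leq R/(f_0;\Delta)$ provided $\delta$ is taken sufficiently small in terms of $\beta$. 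Hence $w(r,\delta)\ll 1$ uniformly, and dropping both this weight and the congruence $r\equiv r_0\pmod q$ (both costs being absorbed into the implied constant) reduces the lemma to proving
$$\Sigma:=\twosum{r\leq R}{\mu(r)=0}g(r)\ll P_1^{-1/3}(\log N)^{\frac{q-2}{q-1}},$$
with an implied constant independent of $N$ and $P_1$.

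The next step is to isolate the offending prime. Since $g$ is supported on integers whose prime factors all lie in $\p'$, any non-squarefree $r$ contributing to $\Sigma$ admits some prime $p\in\p'$ with $p^2\mid r$, and necessarily $p>P_1$ by our definition of $\p_1$. Overcounting by summing over every such prime and decomposing $r=p^\alpha m$ with $\alpha=v_p(r)\geq 2$ and $(m,p)=1$, multiplicativity of $g$ yields
$$\Sigma\leq\sum_{\substack{p>P_1\\ p\in\p'}}\sum_{\alpha\geq 2}g(p^\alpha)\twosum{m\leq R/p^\alpha}{(m,p)=1}g(m).$$
The inner sum over $m$ is bounded by $\sum_{m\leq R}g(m)\ll(\log N)^{(q-2)/(q-1)}$, using the Friedlander--Iwaniec asymptotic (\ref{mult4}) applied to $g$, whose hypotheses were verified in the lemma immediately preceding the statement.

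Finally, I would bound the $p$-factor using Lemma \ref{rho1bound}: since $\rho_1(p^\alpha)\ll p^{4\alpha/3}$ and $(1-\rho_1(p)/p^2)^{-1}=O(1)$ for $p\in\p'$, we have $g(p^\alpha)\ll p^{-2\alpha/3}$ and hence $\sum_{\alpha\geq 2}g(p^\alpha)\ll p^{-4/3}$. Combining these estimates gives
$$\Sigma\ll(\log N)^{\frac{q-2}{q-1}}\sum_{p>P_1}p^{-4/3}\ll P_1^{-1/3}(\log N)^{\frac{q-2}{q-1}},$$
and taking $P_1$ large enough in terms of $\epsilon,q,f$ completes the proof. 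The only mild subtlety is the overcounting inherent in summing over every prime $p$ with $p^2\mid r$; since we only require an upper bound this is harmless, and the side condition $(m,p)=1$ is what makes the application of multiplicativity clean.
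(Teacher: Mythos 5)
Your argument is a genuinely different route from the paper's. The paper factors each non-squarefree $r$ uniquely as $r=r_1r_2$ with $r_1$ squarefree, $r_2>1$ squarefull and $(r_1;r_2)=1$, bounds the sum by the product
$\bigl(\sum_{r_1\leq R,\,\mu(r_1)\ne 0}g(r_1)\bigr)\bigl(\sum_{r_2>1\text{ squarefull}}g(r_2)\bigr)$,
and makes the squarefull factor smaller than $\epsilon$ by choosing $P_1$ large, using $g(r)\ll_\epsilon r^{-2/3+\epsilon}$ from Lemma \ref{rho1bound}. You instead isolate a single prime $p>P_1$ with $p^2\mid r$ and sum over it. Both arguments rest on the same two inputs (the prime-power bound of Lemma \ref{rho1bound} and the squarefree mean value of $g$), and your version makes the quantitative saving $P_1^{-1/3}$ explicit, which is a small bonus.

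There is, however, one step that is not justified as written. You bound the inner sum $\sum_{m\leq R/p^\alpha,\,(m;p)=1}g(m)$ by $\sum_{m\leq R}g(m)$ and invoke (\ref{mult4}); but (\ref{mult4}) and the verification in the lemma preceding the statement apply only to $g$ \emph{restricted to squarefree numbers}, whereas your $m$ runs over all integers. Controlling $\sum_{m\leq R}g(m)$ over all $m$ is exactly the kind of statement the present lemma exists to supply, so the citation is circular. The gap is easily closed: for instance, bound
$$\sum_{m\leq R}g(m)\;\leq\;\prod_{p\leq R}\Bigl(1+\sum_{\alpha\geq 1}g(p^\alpha)\Bigr)\;\ll\;\prod_{p\leq R}\bigl(1+g(p)\bigr)\prod_{p}\bigl(1+O(p^{-4/3})\bigr)\;\ll\;(\log R)^{\frac{q-2}{q-1}}$$
using (\ref{mult2}) together with $g(p^\alpha)\ll p^{-2\alpha/3}$ for $\alpha\geq 2$; alternatively, restrict the inner sum to squarefree $m$ and absorb the remaining higher prime powers into the outer factor, which is in effect the paper's squarefree-times-squarefull decomposition. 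With that repair your proof is complete.
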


\begin{proof}
Any $r\in\N$ can be written uniquely as $r=r_1r_2$ for some squarefree $r_1$ and some  squarefull $r_2$ satisfying $(r_1;r_2)=1$.  In addition if $\mu(r)=0$ then $r_2>1$.  Since $w(r,\delta)\ll 1$ and $g(r)\geq 0$ our sum may be bounded by 
$$\twosum{r_1r_2\leq R/(f_0;\Delta)}{r_2>1}g(r_1)g(r_2),$$
where the sum is restricted to squarefree $r_1$ and squarefull $r_2$ with $(r_1;r_2)=1$.

Since $g(r)$ is supported on numbers having no prime factor in $\p_1$ we can use Lemma \ref{rho1bound} to deduce that for all $r$ 
$$g(r)\ll_\epsilon r^{-\frac23+\epsilon}.$$
It follows that 
$$\sum_{r \text{ squarefull}}g(r)<\infty.$$
Furthermore, if we include all primes up to  $P_1$ in $\p_1$ then all terms in this sum with $r\leq P_1$ are $0$.  It follows that for any $\epsilon>0$ we can choose $P_1$ sufficiently large so that 
$$\twosum{r>1}{r \text{ squarefull}}g(r)<\epsilon.$$
Our original sum may therefore be bounded by 
$$(\twosum{r_1\leq R/(f_0;\Delta)}{\mu(r_1)\ne 0}g(r_1))(\sum_{r_2\text{ squarefull}}g(r_2)).$$
Using Lemma \ref{multlem} the first sum is $O((\log R)^{\frac{q-2}{q-1}})$  so the result follows.  
\end{proof}

It follows from the last lemma that, with a suitable choice of $P_1$, the non-squarefree $r$ give a contribution to $S_4$ bounded by 
$$\frac{(\epsilon+o(1))}{(\log N)^{\frac{2}{q-1}}}.$$

Combining all of the results of this subsection we see that for any $\epsilon>0$, by taking sufficiently many small primes in $\p_1$ and $\delta$ sufficiently small, we get the bound 
$$S_4\leq\int_0^{3-\beta} W(s)\,ds\frac{(A_1A_2c_gc_1(f,q)c_2(f,q)+\epsilon+o(1))(q-2)N^2}{\Delta^2(q-1)^2(\log N)^{\frac{2}{q-1}}}.$$
Finally we must remove the constants $c_g,c_1(f,q)$ from this bound. Recall that these are defined by 
\begin{eqnarray*}
c_g&=&\frac{1}{\Gamma(2-\frac{1}{q-1})}\prod_p(1-\frac{1}{p})^{\frac{q-2}{q-1}}(1+g(p))\\
&=&\frac{1}{\Gamma(2-\frac{1}{q-1})}\lim_{x\rightarrow \infty}\prod_{p\leq x}(1-\frac{1}{p})^{\frac{q-2}{q-1}}(1+g(p))\\
&=&\frac{1}{\Gamma(2-\frac{1}{q-1})}\lim_{x\rightarrow \infty}(\frac{e^{-\gamma}}{\log x})^{\frac{q-2}{q-1}}\twoprod{p\leq x}{p\in\p'}(1-\frac{\rho_1(p)}{p^2})^{-1}\\
\end{eqnarray*}
and 
$$c_1(f,q)=\lim_{x\rightarrow\infty}(\log x)^{\frac{q-2}{q-1}}\twoprod{p<x}{p\in\p'}(1-\frac{\rho_1(p)}{p^2}).$$
It follows that 
$$c_gc_1(f,q)=\frac{e^{-\gamma\frac{q-2}{q-1}}}{\Gamma(2-\frac{1}{q-1})}.$$
We therefore conclude that 
$$S_4\leq\int_0^{3-\beta} W(s)\,ds\frac{\left(A_1A_2e^{-\gamma\frac{q-2}{q-1}}c_2(f,q)(q-2)+\epsilon+o(1)\right)N^2}{\Delta^2\Gamma(2-\frac{1}{q-1})(q-1)^2(\log N)^{\frac{2}{q-1}}}.$$

\subsection{Conclusion}

Combining the bounds for $S_1,S_2,S_3$ and $S_4$ we conclude that for any $\epsilon>0$ we can take sufficiently many small primes in $\p_1$ so that we have, as $N\rightarrow \infty$, that  
$$S(\ca,\p,x)\geq \frac{c_2(f,q)N^2}{\Delta^2(\log N)^{\frac{2}{q-1}}}(F(q)-\epsilon+o(1))$$
where 
$$F(q)=B_1-\frac{A_1}{q-1}\int_1^\beta(2-s)^{-\frac{2}{q-1}}\,\frac{ds}{s}-\frac{A_1A_2e^{-\gamma\frac{q-2}{q-1}}(q-2)}{\Gamma(2-\frac{1}{q-1})(q-1)^2}\int_0^{3-\beta} W(s)\,ds$$
and
$$W(s)=(1-2q^{-1})^{-\frac{q-2}{q-1}}(2q^{-1})^{-\frac{2}{q-1}}(2-s)^{-\frac{q}{q-1}}s^{\frac{-1}{q-1}}.$$
Recall that the values $A_1,B_1$ and $A_2$ all depend on $q$.  As the sieve dimension $\kappa\rightarrow 0$ we have $A(\kappa),B(\kappa)\rightarrow 1$.  It follows that 
$$\lim_{q\rightarrow \infty}F(q)=1.$$
Therefore $F(q)$ is positive for $q\geq q_0$ for some absolute $q_0$.  For any such $q$ we can then choose $N$ sufficiently large to get $S(\ca,\p,x)>0$ and thus (\ref{eq:diophantine}) has a rational solution.

To give the best possible bound we must choose $\beta$ to minimise
$$\int_1^\beta(2-s)^{-\frac{2}{q-1}}\,\frac{ds}{s}+\frac{A_2e^{-\gamma\frac{q-2}{q-1}}(q-2)}{\Gamma(2-\frac{1}{q-1})(q-1)}\int_0^{3-\beta} W(s)\,ds.$$
Thus we must solve 
$$(2-\beta)^{-\frac{2}{q-1}}\beta^{-1}-\frac{A_2e^{-\gamma\frac{q-2}{q-1}}(q-2)}{\Gamma(2-\frac{1}{q-1})(q-1)}W(3-\beta)=0,$$
that is 
$$(2-\beta)^{-\frac{2}{q-1}}\beta^{-1}-\frac{A_2e^{-\gamma\frac{q-2}{q-1}}(q-2)}{\Gamma(2-\frac{1}{q-1})(q-1)}(1-2q^{-1})^{-\frac{q-2}{q-1}}(2q^{-1})^{-\frac{2}{q-1}}(\beta-1)^{-\frac{q}{q-1}}(3-\beta)^{\frac{-1}{q-1}}=0.$$

To complete the proof of Theorem \ref{mainthm} we must show that for all primes $q\geq 7$ there exists a choice of $\beta\in (\frac{3}{2},2)$ for which $F(q)>0$.  The case $q=7$ is the most delicate numerically so we deal with it first.  

From Friedlander and Iwaniec's table in \cite[Section 11.19]{opera} we obtain the value 
$$A_2=A(5/6)=2.56140\ldots.$$
The constants $A_1,B_1$ are given by \cite[(11.62)]{opera}.  We find by numerical integration that 
$$A_1=A(1/3)=1.27713\ldots$$
and 
$$B1=B(1/3)=0.71213\ldots.$$
By solving the above equation numerically we discover that the optimal choice for $\beta$ is approximately $1.994$.  We conclude, evaluating all integral numerically, that 
$$F(7)\approx 0.0504>0.$$
Due to the use of numerical integration we cannot be completely sure that $F(7)>0$.  However we are confident that the computations were sufficiently accurate to make this extremely likely.  

For $q\geq 11$ we do not need to be quite so careful.  Since $A_2\leq A(1)$ we have 
$$F(q)\geq B_1-\frac{A_1}{q-1}\int_1^\beta(2-s)^{-\frac{2}{q-1}}\,\frac{ds}{s}-\frac{A_1A(1)e^{-\gamma\frac{q-2}{q-1}}(q-2)}{\Gamma(2-\frac{1}{q-1})(q-1)^2}\int_0^{3-\beta} W(s)\,ds.$$
As $q$ increases $B_1=B(\frac{2}{q-1})$ is increasing whereas $A_1=A(\frac{2}{q-1})$ is decreasing.  In addition, for any $s\in (1,\beta)$ the quantity 
$$(2-s)^{-\frac{2}{q-1}}$$
is decreasing, as are 
$$e^{-\gamma\frac{q-2}{q-1}},$$
$$\frac{q-2}{(q-1)^2}$$
and 
$$\frac{1}{\Gamma(2-\frac{1}{q-1})}.$$
Recall that 
$$W(s)=(1-2q^{-1})^{-\frac{q-2}{q-1}}(2q^{-1})^{-\frac{2}{q-1}}(2-s)^{-\frac{q}{q-1}}s^{\frac{-1}{q-1}}.$$
It can be shown that for any $s\in (0,3-\beta)$ this decreases as we increase $q$.  

We can conclude that, for a fixed $\beta\in (\frac{3}{2},2)$, the above bound for $F(q)$ is an increasing function of $q$.  It follows that it is sufficient that the bound is positive when $q=11$.  Using that 
$$A(1)=2e^\gamma=3.562144\ldots,$$
$$A(1/5)=1.15147\ldots$$
and 
$$B(1/5)=0.92055\ldots$$
we can deduce, by taking $\beta=1.9$, that for any prime $q\geq 11$ we have 
$$F(q)\geq 0.514.$$

In conclusion, $F(q)>0$  for all primes $q\geq 7$ so Theorem \ref{mainthm} holds for all primes $q\geq 7$.  

\addcontentsline{toc}{section}{References} 
\bibliographystyle{plain}
\bibliography{../biblio}

\begin{thebibliography}{1}

\bibitem{bartels-81}
H.-J. Bartels.
\newblock Zur {A}rithmetik von {K}onjugationsklassen in algebraischen
  {G}ruppen.
\newblock {\em J. Algebra}, 70(1):179--199, 1981.

\bibitem{rhbbrowning}
T.~D. Browning and D.~R. Heath-Brown.
\newblock Quadratic polynomials represented by norm forms.
\newblock {\em Geom. Funct. Anal.}, 22(5):1124--1190, 2012.

\bibitem{browningmat}
T.~D. Browning and L.~Matthiesen.
\newblock Norm forms for arbitrary number fields as products of linear
  polynomials.
\newblock arXiv:1307.7641.

\bibitem{ct-salb}
J.-L. Colliot-Th{\'e}l{\`e}ne and P.~Salberger.
\newblock Arithmetic on some singular cubic hypersurfaces.
\newblock {\em Proc. London Math. Soc. (3)}, 58(3):519--549, 1989.

\bibitem{crelle}
J.-L. Colliot-Th{\'e}l{\`e}ne, J.-J. Sansuc, and P.~Swinnerton-Dyer.
\newblock Intersections of two quadrics and {C}h\^atelet surfaces. {I}.
\newblock {\em J. Reine Angew. Math.}, 373:37--107, 1987.

\bibitem{daniel}
S.~Daniel.
\newblock On the divisor-sum problem for binary forms.
\newblock {\em J. Reine Angew. Math.}, 507:107--129, 1999.

\bibitem{dsw}
U.~Derenthal, A.~Smeets, and D.~Wei.
\newblock Universal torsors and values of quadratic polynomials represented by
  norms.
\newblock arXiv:1202.3567.

\bibitem{opera}
J.~Friedlander and H.~Iwaniec.
\newblock {\em Opera de cribro}, volume~57 of {\em American Mathematical
  Society Colloquium Publications}.
\newblock American Mathematical Society, Providence, RI, 2010.

\bibitem{gras}
G.~Gras.
\newblock {\em Class field theory}.
\newblock Springer Monographs in Mathematics. Springer-Verlag, Berlin, 2003.
\newblock From theory to practice, Translated from the French manuscript by
  Henri Cohen.

\end{thebibliography}

\bigskip
\bigskip

Mathematical Institute,

University of Oxford,

Andrew Wiles Building, 

Radcliffe Observatory Quarter, 

Woodstock Road, 

Oxford 

OX2 6GG 

UK
\bigskip

{\tt irving@maths.ox.ac.uk}
 
\end{document}